\def\Z{\mathbb{Z}}
\def\Q{\mathbb{Q}}
\def\F{\mathbb{F}}
\def\C{\mathbb{C}}
\def\p{{\mathfrak{p}}}
\def\proj{\mathrm{proj}}
\def\g{{f_0}}
\def\ddd{{\delta}}
\def\ee{{\varepsilon}}
\def\eee{{\tilde{\varepsilon}}}
\def\Spec{\mathrm{Spec}}
\def\sep{\mathrm{sep}}
\def\im{\mathrm{im}}
\def\dim{\mathrm{dim}}
\def\exp{\mathrm{exp}}
\def\lcm{\mathrm{lcm}}
\def\log{\mathrm{log}}
\def\m{{\mathfrak m}}
\def\n{{n}}
\def\F{{\mathbb F}}
\def\isom{\xrightarrow{\sim}}
\numberwithin{equation}{section}
\newtheorem{thm}{Theorem}[section]
\newtheorem{lem}[thm]{Lemma}
\newtheorem{prop}[thm]{Proposition}
\theoremstyle{definition}
\newtheorem{algorithm}[thm]{Algorithm}
\newtheorem{ex}[thm]{Example}
\newtheorem{rem}[thm]{Remark}
\title
[Algorithms for commutative algebras over the rational numbers]
{Algorithms for commutative algebras over the rational numbers}
\author[H.\ W.\ Lenstra, Jr.]{H.\ W.\ Lenstra, Jr.}
\address{Mathematisch Instituut, Universiteit Leiden, The Netherlands}
\email{hwl@math.leidenuniv.nl}
\author[A.\ Silverberg]{A.\ Silverberg}
\address{Department of Mathematics, University of California, Irvine, CA 92697, USA}
\email{asilverb@math.uci.edu}
\subjclass[2010]{Primary: 13E10, Secondary: 13P99, 68W30}
\keywords{finite-dimensional commutative algebras, algorithms, Jordan-Chevalley decomposition, lifting idempotents}
\thanks{Support for the research was provided by the Alfred P.~Sloan Foundation.
}
\begin{document}

\begin{abstract} 
The algebras considered in this paper are commutative rings of which the additive
group is a finite-dimensional vector space over the field of rational numbers.
We present deterministic polynomial-time algorithms that, given such an algebra,
determine its nilradical, all of its prime ideals, as well as the corresponding
localizations and residue class fields, its largest separable subalgebra,
and its primitive idempotents.
We also solve the discrete logarithm problem in the multiplicative group
of the algebra.
While deterministic polynomial-time algorithms were known earlier,
our approach is different from previous ones.
One of our tools is a primitive element algorithm; it decides whether
the algebra has a primitive element and, if so, finds one, all in polynomial time.
A methodological novelty is the use of derivations to replace
a Hensel-Newton iteration.
It leads to an explicit formula for lifting idempotents against nilpotents
that is valid in any commutative ring.
\end{abstract}

\maketitle

\section{Introduction}

In the present paper, we mean by a {\em $\Q$-algebra} a {\em commutative} ring
$E$ of which the additive
group is a {\em finite-dimensional} vector space over the field $\Q$ of rational numbers.
We give deterministic polynomial-time algorithms for several basic 
computational questions one may ask about $\Q$-algebras,
taking an approach that differs from previous ones.
The reader wishing to implement our algorithms is warned that
we have attempted to optimize the efficiency of our proofs
rather than of our algorithms.

In algorithms, we specify a $\Q$-algebra $E$ by listing a
system of ``structure constants" $a_{ijk}\in\Q$, for $i,j,k\in\{ 1,2,\ldots,n\}$,
where $n = \dim_\Q(E)$. 
These determine the multiplication, in the sense that for some
$\Q$-basis $e_1,e_2,\ldots,e_n$ of $E$ 
one has $e_ie_j = \sum_{k=1}^n a_{ijk}e_k$ for all $i,j$.
Elements of $E$ are then represented by their vector of coordinates
on that basis, $\Q$-algebra homomorphisms are represented by matrices,
and ideals and subalgebras of $E$ by $\Q$-bases for them,
expressed in $e_1,e_2,\ldots,e_n$.

\subsection*{The Jordan-Chevalley decomposition}
Let $E$ be a $\Q$-algebra. We call $\alpha\in E$ {\em nilpotent} if there is a
positive integer $r$ with $\alpha^r=0$.
The set of nilpotent elements of $E$ is the {\em nilradical}
of $E$, denoted by $\sqrt{0}$ or  $\sqrt{0_E}$. 
We call a polynomial $f\in\Q[X]$ {\em separable}
if $f$ is coprime to its derivative $f'$, and $\alpha\in E$ is called 
{\em separable} (over $\Q$) if there exists a separable polynomial
$f\in\Q[X]$ with $f(\alpha)=0$.
We write $E_\sep$ for the set of separable elements of $E$.

The following result is well known.

\begin{thm}
\label{Ewellknownthm0i}
Let $E$ be a $\Q$-algebra as defined above. Then
 the nilradical $\sqrt{0}$ is an ideal of $E$, and 
 $E_\sep$ is a sub-$\Q$-algebra of $E$.
 Also, one has 
 $$E = E_\sep \oplus \sqrt{0}$$ in the sense that
 the map  $E_\sep \oplus \sqrt{0} \to E$,
 $(u,v)\mapsto u+v$ is an isomorphism of $\Q$-vector spaces.
\end{thm}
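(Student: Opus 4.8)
The plan is to treat the three assertions in turn, after reducing everything to the single fact that, for $\alpha\in E$, the $\Q$-algebra $\Q[\alpha]$ is reduced precisely when $\alpha\in E_\sep$. That $\sqrt{0}$ is an ideal needs no finiteness: for $\alpha^r=\beta^s=0$ and $\gamma\in E$ the binomial theorem gives $(\alpha+\beta)^{r+s-1}=0$, and $(\gamma\alpha)^r=\gamma^r\alpha^r=0$. For the rest I would first note that finite-dimensionality gives each $\alpha$ a minimal polynomial $m_\alpha\in\Q[X]$ (the monic generator of the nonzero ideal $\{f:f(\alpha)=0\}$) with $\Q[\alpha]\cong\Q[X]/(m_\alpha)$, and then prove
$$\alpha\in E_\sep\iff m_\alpha\text{ is squarefree}\iff\Q[\alpha]\text{ is reduced}.$$
Here ``squarefree'' coincides over $\Q$ with the paper's ``separable'' for polynomials. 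The first equivalence is because every $f$ killing $\alpha$ is a multiple of $m_\alpha$ and a divisor of a squarefree polynomial is squarefree; the second because, writing $m_\alpha=\prod_i p_i^{e_i}$, the class of $\prod_i p_i$ in $\Q[\alpha]$ is nilpotent and is nonzero exactly when some $e_i\ge2$. When $m_\alpha$ is squarefree, the Chinese remainder theorem makes $\Q[\alpha]$ a finite product $\prod_i\Q[X]/(p_i)$ of finite field extensions of $\Q$.

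Next I would show $E_\sep$ is a sub-$\Q$-algebra. Since $\Q\cdot1\subseteq E_\sep$, it suffices to check that for $\alpha,\beta\in E_\sep$ the subring $\Q[\alpha,\beta]\subseteq E$ is reduced: a reduced finite-dimensional $\Q$-algebra is Artinian, hence a finite product of finite field extensions of $\Q$, and in such a product every element is killed by the squarefree product of the distinct minimal polynomials of its components, so lies in $E_\sep$; this covers $\alpha+\beta$, $\alpha\beta$, and $q\alpha$ for $q\in\Q$. To see $\Q[\alpha,\beta]$ is reduced, I would view it as a quotient, under the multiplication map, of $\Q[\alpha]\otimes_\Q\Q[\beta]$. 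By the previous step $\Q[\alpha]$ and $\Q[\beta]$ are finite products of finite \emph{separable} extensions of $\Q$; each such extension is $\Q[X]/(h)$ with $h$ squarefree, and squarefreeness of $h$ survives any field extension of $\Q$ (being witnessed by a Bézout identity for $h$ and $h'$), so each tensor factor $K_i\otimes_\Q L_j$ is a product of fields, and hence so is $\Q[\alpha]\otimes_\Q\Q[\beta]$. Finally every ideal of a finite product of rings is a product of ideals, so a quotient of a finite product of fields is again a product of fields, hence reduced; thus $\Q[\alpha,\beta]$ is reduced.

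For the decomposition $E=E_\sep\oplus\sqrt{0}$ I would check that the $\Q$-linear map $(u,v)\mapsto u+v$ is injective and surjective. Injectivity is the statement $E_\sep\cap\sqrt{0}=\{0\}$: if $\nu$ is separable and nilpotent then $m_\nu$ divides a power of $X$ and is squarefree, forcing $m_\nu=X$ (assuming $E\ne0$) and $\nu=0$. For surjectivity, fix $\alpha$ and work in $A=\Q[\alpha]$ with its nilradical $\mathfrak{n}$, which is nilpotent. Let $g$ be the squarefree part of $m_\alpha$; then $g$ is separable, $g\mid m_\alpha\mid g^N$ for some $N$ (so $g(\alpha)\in\mathfrak{n}$), and a relation $ug+vg'=1$ shows $g'(\alpha)$ is a unit of $A$, being congruent modulo the nilpotent $u(\alpha)g(\alpha)$ to $1-u(\alpha)g(\alpha)$ and $\mathfrak{n}$ lying in the Jacobson radical. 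Then I would run the Newton iteration $\sigma_0=\alpha$, $\sigma_{k+1}=\sigma_k-g(\sigma_k)g'(\sigma_k)^{-1}$: inductively $\sigma_k\equiv\alpha\pmod{\mathfrak{n}}$, so each $g'(\sigma_k)$ is again a unit, and the identity $g(X-Y)\equiv g(X)-g'(X)Y\pmod{Y^2}$ gives $g(\sigma_{k+1})\in\big(g(\sigma_k)^2\big)$, whence $g(\sigma_k)\in\mathfrak{n}^{2^k}$. Choosing $k$ with $\mathfrak{n}^{2^k}=0$ yields $g(\sigma_k)=0$, so $\sigma:=\sigma_k\in E_\sep$, while $\alpha-\sigma\in\mathfrak{n}\subseteq\sqrt{0}$, giving $\alpha=\sigma+(\alpha-\sigma)\in E_\sep+\sqrt{0}$.

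The routine parts are the claim about $\sqrt{0}$ and the injectivity above. The real work sits in two places: the closure of $E_\sep$ under addition and multiplication, which is not a formal matter, since separability is an existential condition and the argument genuinely needs that the field extensions appearing in $\Q[\alpha]$ are separable so that the pertinent tensor products remain reduced; and the surjectivity in the direct-sum decomposition, where the Newton iteration — equivalently, the existence of a coefficient field in each local factor of $\Q[\alpha]$ — carries the load. This last step is the one the paper proposes to handle by means of derivations rather than a Hensel--Newton iteration.
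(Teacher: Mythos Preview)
Your proof is correct and follows essentially the same route as the paper's: the characterization $\alpha\in E_\sep\iff m_\alpha$ squarefree is the paper's Lemma~\ref{lemma41}; your tensor-product argument for $\Q[\alpha,\beta]$ is a repackaging of Lemma~\ref{lemma42}, since $\Q[\alpha]\otimes_\Q\Q[\beta]\cong\Q[\alpha][Y]/(h(Y))$; and your Newton iteration is exactly Lemma~\ref{Henselthma}. One small correction to your closing remark: the paper \emph{does} prove Theorem~\ref{Ewellknownthm0i} via Hensel--Newton (see Lemma~\ref{Henselthma} and the explicit statement in the introduction); the derivation-based method of Theorem~\ref{ggprthmintro} is deployed only for the \emph{algorithmic} companion, Theorem~\ref{Ewellknownthm0ii}.
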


Writing an element of $E$ as the sum of a separable element and a 
nilpotent element may be viewed as its {\em Jordan-Chevalley decomposition\/}
or {\em Dunford 
decomposition}. We give a proof of Theorem \ref{Ewellknownthm0i} in
section \ref{sepalgssect}, using a Hensel-Newton iteration.

In section \ref{splitnilradsect} we exhibit a polynomial-time
algorithm for achieving the Jordan-Chevalley decomposition
(Algorithm \ref{xyzalgor}).
Interestingly, it gives rise to a method for determining the
nilradical that differs from the traditional way
of doing this, which depends on the trace function
(see section 4.4 of \cite{FR85}).
And more interestingly, we compute the subalgebra $E_\sep$
at the same time.
Altogether, we obtain the following result, which will
be used in several later algorithms.

\begin{thm}
\label{Ewellknownthm0ii}
There is a deterministic polynomial-time algorithm that,
given a $\Q$-algebra
$E$, computes a $\Q$-basis for $E_\sep$ and a $\Q$-basis for $\sqrt{0}$,
as well as the matrix describing the map
$E_\sep \oplus \sqrt{0} \to E$ from Theorem \ref{Ewellknownthm0i} and its inverse
(which describes the inverse map $E \to E_\sep \oplus \sqrt{0}$).
\end{thm}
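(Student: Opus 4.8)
The plan is to reduce everything to one subroutine: given $\alpha\in E$, compute its separable part $\alpha_s$ in the decomposition $\alpha=\alpha_s+(\alpha-\alpha_s)$ furnished by Theorem~\ref{Ewellknownthm0i}, where $\alpha_s\in E_\sep$ and $\alpha-\alpha_s\in\sqrt0$. By Theorem~\ref{Ewellknownthm0i} the map $\pi\colon E\to E$, $\alpha\mapsto\alpha_s$, is precisely the $\Q$-linear projection onto the summand $E_\sep$ with kernel $\sqrt0$; in particular $\im\pi=E_\sep$ and $\ker\pi=\sqrt0$. Hence, once $\pi(e_1),\dots,\pi(e_n)$ are known, we have the matrix of $\pi$, and one pass of Gaussian elimination over $\Q$ yields a $\Q$-basis of $\im\pi=E_\sep$, a $\Q$-basis of $\ker\pi=\sqrt0$, the matrix of $E_\sep\oplus\sqrt0\to E$ obtained by juxtaposing these two bases (invertible by Theorem~\ref{Ewellknownthm0i}), and its inverse. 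So it suffices to make the subroutine run in polynomial time.

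Fix $\alpha\in E$. First compute the minimal polynomial $g\in\Q[X]$ of $\alpha$ over $\Q$ --- equivalently, of the operator ``multiplication by $\alpha$'' on $E$ --- by finding the first linear dependence among $1,\alpha,\alpha^2,\dots$ in the $n$-dimensional space $E$; it is monic of some degree $d\le n$. Next compute the squarefree part $h:=g/\gcd(g,g')\in\Q[X]$ by the Euclidean algorithm in $\Q[X]$. Since $\Q$ is perfect, $h$ is separable in the sense defined above, and $h$ has the same roots as $g$ in $\Qb$, so $g$ divides $h^{\,d}$; consequently $\delta:=h(\alpha)$ satisfies $\delta^{\,d}=0$, so $(\delta)$ is a nilpotent ideal of $E$. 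Moreover $h'(\alpha)$ is a unit of $E$: from a Bézout identity $uh+vh'=1$ in $\Q[X]$ one gets $v(\alpha)\,h'(\alpha)=1-u(\alpha)\delta$, which is a unit because $1$ minus a nilpotent element is a unit.

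Now run the Hensel--Newton iteration $x_0:=\alpha$, $x_{k+1}:=x_k-h(x_k)\,h'(x_k)^{-1}$, where at each step $h'(x_k)^{-1}$ is found by solving over $\Q$ the linear system expressing multiplication by $h'(x_k)$ on $E$. A standard induction gives $x_k-\alpha\in(\delta)$ for all $k$ --- so that $h'(x_k)$ differs from the unit $h'(\alpha)$ by a nilpotent and hence stays a unit --- together with $h(x_k)\in(\delta)^{2^k}$; since $(\delta)^{\,d}=0$, after $k=\lceil\log_2 d\,\rceil$ steps one reaches $h(x_k)=0$. Then $x_k$ is a root of the separable polynomial $h$, so $x_k\in E_\sep$, while $x_k-\alpha\in\sqrt0$; by the uniqueness clause of Theorem~\ref{Ewellknownthm0i}, $x_k=\alpha_s$. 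The iteration takes $O(\log n)$ steps, and, like every other step above, each one is a bounded amount of arithmetic and linear algebra over $\Q$. Running the subroutine for $\alpha=e_1,\dots,e_n$ and assembling the matrices as in the first paragraph then proves the theorem, modulo the complexity bookkeeping.

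The remaining point --- which I expect to be the only genuine work --- is that all rational numbers produced have bit-length bounded polynomially in $n$ and in the bit-length of the input structure constants. The two delicate places are the computation of $\gcd(g,g')$, where one uses a subresultant (fraction-free) version of the Euclidean algorithm rather than the naive one, and the $O(\log n)$ inversions in the Newton iteration, each of which can enlarge heights by a factor polynomial in $n$, so that the compounded blow-up over $\lceil\log_2 d\,\rceil$ steps is still polynomial; alternatively, one checks a priori that $\alpha_s=P(\alpha)$ for some $P\in\Q[X]$ of degree $<d$ whose height is controlled by that of $g$, which bounds the output directly. The minimal-polynomial computation and the final linear algebra are handled by standard fraction-free elimination (cf.\ \cite{FR85}). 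This is the analysis carried out in Section~\ref{splitnilradsect}; the one structural difference is that Algorithm~\ref{xyzalgor} avoids iterating $x_k\mapsto x_{k+1}$ altogether, using instead the derivation-based closed formula announced in the abstract, but the bookkeeping is of the same nature.
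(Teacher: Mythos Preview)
Your overall strategy is sound and is in fact the classical one: it is exactly the Hensel--Newton argument the paper itself uses to prove the \emph{existence} statement (Lemma~\ref{Henselthma}, hence Theorem~\ref{Ewellknownthm0i}). You are also right that, once the projection $\pi$ is known on a basis, the rest is linear algebra. And you correctly flag that the paper's Algorithm~\ref{xyzalgor} replaces the iteration by a single linear system coming from Theorem~\ref{ggprthmintro}: one solves $q'\hat g+q\hat g'\equiv 1\pmod{(g,g')}$ for $q$ of degree $<\deg(g,g')$ and puts $v=q(\alpha)\hat g(\alpha)$, $u=\alpha-v$.

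The one genuine gap is in your complexity claim. You write that each Newton inversion ``can enlarge heights by a factor polynomial in $n$, so that the compounded blow-up over $\lceil\log_2 d\rceil$ steps is still polynomial.'' But inverting $h'(x_k)$ in the $d$-dimensional algebra $\Q[\alpha]$ via Cramer multiplies the \emph{bit-length} of the coordinates by a factor $\Theta(d)$ (determinants of $d\times d$ matrices), and evaluating $h$ at $x_k$ contributes another such factor; thus $L_{k+1}=O(d^2L_k+\text{poly})$. After $\lceil\log_2 d\rceil$ rounds this gives a blow-up of order $d^{\Theta(\log d)}$, which is quasi-polynomial, not polynomial. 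This is precisely why the paper remarks that the Hensel approach in \cite{BBC96} had to be ``carefully formulated so as to avoid coefficient blow-up,'' and why the authors prefer the derivation-based closed form, whose polynomial bit bound follows immediately from a single bounded-size linear system.

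Your proposed escape hatch---an \emph{a priori} height bound on the $P\in\Q[X]$ with $\alpha_s=P(\alpha)$---is correct in spirit (indeed the paper's step~(iii) of Algorithm~\ref{xyzalgor} produces $P=X-q\hat g$ as the solution of a linear system of size $<d$, so its height is polynomially bounded), but it does not by itself repair the naive iteration: knowing the answer is small does not prevent the intermediate $x_k$ from being large. To make the iterative route polynomial you would need either the truncation device of \cite{BBC96} or a modular/CRT computation against the a~priori bound; otherwise, switch to the paper's one-shot linear system, which is what buys the clean running-time analysis.
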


An algorithm as in Theorem \ref{Ewellknownthm0ii} can also be found
in the Appendix to \cite{BBC96}.
It is a Hensel iteration, carefully formulated so as to avoid
coefficient blow-up.
Our own proof of Theorem \ref{Ewellknownthm0ii}, which is given
in section \ref{splitnilradsect}, uses no iteration at all.
It depends on the following result, in which $h'$ denotes
the formal derivative of $h\in\Q[X]$.

\begin{thm}
\label{ggprthmintro}
Let $g\in\Q[X]$ be non-zero, and let $E$ be the 
$\Q$-algebra $\Q[X]/(g)$.
Then there is a well-defined surjective $\Q$-linear map
$$
\delta : \Q[X]/(g) \to \Q[X]/(g,g')
$$ 
sending $h+(g)$ to $h'+(g,g'),$ 
and its kernel equals $E_\sep$.
\end{thm}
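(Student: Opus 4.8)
The plan is to establish the formal properties of $\delta$ first, then to compute $\dim_\Q\ker\delta$ and $\dim_\Q E_\sep$ separately and observe that they coincide, and finally to prove the single inclusion $E_\sep\subseteq\ker\delta$; the dimension count then forces equality.

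The formal properties are cheap. Well-definedness and $\Q$-linearity come for free: if $h_1-h_2\in(g)$, say $h_1-h_2=qg$, then $h_1'-h_2'=q'g+qg'\in(g,g')$, and $h\mapsto h'$ is $\Q$-linear; surjectivity holds because over $\Q$ every polynomial admits a formal antiderivative, so every class in $\Q[X]/(g,g')$ lies in the image of $\delta$. After replacing $g$ by a monic associate and discarding the trivial case in which $g$ is a non-zero constant, I would use that $\Q[X]$ is a principal ideal domain to write $(g,g')=(g_1)$ with $g_1=\gcd(g,g')$, and then invoke the standard characteristic-zero identity $g_1=g/\rad(g)$. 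Its proof is the one-line remark that, writing $g=\prod_i p_i^{e_i}$ with the $p_i$ distinct monic irreducibles, each $p_i$ is separable (so $p_i\nmid p_i'$), whence the $p_i$-adic valuation of $g'$ is exactly $e_i-1$. Thus the codomain of $\delta$ is $\Q[X]/(g_1)$, of $\Q$-dimension $\deg g-\deg g_1=\deg\rad(g)$, and so $\dim_\Q\ker\delta=\deg\rad(g)$.

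Turning to $E_\sep$: an element $h+(g)$ of $E$ is nilpotent precisely when $p_i\mid h$ for every $i$, that is, when $\rad(g)\mid h$; hence $\sqrt{0_E}=(\rad(g))/(g)$ and $E/\sqrt{0_E}\cong\Q[X]/(\rad(g))$. By Theorem~\ref{Ewellknownthm0i}, $E_\sep$ is a $\Q$-linear complement of $\sqrt{0_E}$ in $E$, so $\dim_\Q E_\sep=\dim_\Q E/\sqrt{0_E}=\deg\rad(g)=\dim_\Q\ker\delta$. It therefore remains only to show $E_\sep\subseteq\ker\delta$. Given $\alpha=h+(g)\in E_\sep$, I would choose a separable $f\in\Q[X]$ with $f(\alpha)=0$, i.e.\ $g\mid f(h)$. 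Differentiating $f(h)$ gives $f'(h)h'\in(g,g')$. On the other hand, substituting $h$ into a Bézout identity $af+bf'=1$ (available since $f$ is separable) yields $a(h)f(h)+b(h)f'(h)=1$, and reducing modulo $(g,g')$ --- in which $f(h)$ vanishes because $g\mid f(h)$ --- shows that $f'(h)$ is a unit modulo $(g,g')$. Multiplying $f'(h)h'\equiv 0$ by this inverse gives $h'\in(g,g')$, i.e.\ $\delta(\alpha)=0$, as wanted.

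I expect the one genuinely non-elementary ingredient to be the equality $\dim_\Q E_\sep=\deg\rad(g)$, which is exactly where the direct-sum decomposition of Theorem~\ref{Ewellknownthm0i} does the work; everything else --- well-definedness, surjectivity, the $\gcd$ identity, and the final inclusion --- is routine algebra with formal derivatives and a Bézout relation, so I do not anticipate a real obstacle there.
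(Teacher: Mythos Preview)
Your argument is correct and follows essentially the same strategy as the paper's: establish well-definedness and surjectivity, prove the inclusion $E_\sep\subseteq\ker\delta$ via a B\'ezout relation for $f,f'$ together with the chain rule $(f(h))'=f'(h)h'$, and finish by the dimension count based on $(g,g')=(g/\rad g)$. The only cosmetic difference is that the paper reads off $\dim_\Q E_\sep=\deg\rad(g)$ from Theorem~\ref{Ewellknownthm}(iv) rather than directly from Theorem~\ref{Ewellknownthm0i} together with the identification $\sqrt{0_E}=(\rad g)/(g)$, but the two routes are equivalent.
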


For the proof,
see section \ref{thm1678pfsect}.

One may wonder whether the 
method for determining $E_\sep$ that  
is described 
in Theorem \ref{ggprthmintro} 
applies directly to any $\Q$-algebra.
That is, if $E$ is a $\Q$-algebra, with module of
K\"ahler differentials $\Omega_{E/\Q}$ and
universal derivation ${\mathrm{d}} : E \to \Omega_{E/\Q}$,
is $\ker({\mathrm{d}})$ necessarily equal to $E_\sep$?
(See \cite{Eisenbud} 
for the unexplained terms.)
We are grateful to Maarten Derickx for having provided a
counterexample: if $I \subset \Q[X,Y]$ denotes the ideal
$(5X^4+Y^3,3XY^2+4Y^3,Y^5)$ then
$f=X^5+XY^3+Y^4+I \in \Q[X,Y]/I=E$ satisfies
$f\in\ker({\mathrm{d}})$ while $f\notin E_\sep = \Q\cdot 1$.

\subsection*{Lifting idempotents against nilpotents}
As an example, we discuss the algebra
$E = \Q[X]/(g)$,
where $g=X^m(1-X)^n$ with $m,n\in\Z_{> 0}$.
The nilradical $\sqrt{0}$ of $E$ is the ideal generated by
$X(1-X) + (g)$, and by Theorem \ref{Ewellknownthm0i} the 
natural map
$$
E_\sep \to E/\sqrt{0} \isom  \Q[X]/(X(1-X))
$$
is an isomorphism of $\Q$-algebras.
It follows that there is a unique element $y\in E_\sep$
that maps to $X + (X(1-X))$, and that this element satisfies
$y(1-y)=0$ and $E_\sep = \Q\cdot 1 \oplus \Q\cdot y$.
By Theorem \ref{ggprthmintro}, we have $y = f+ (g)$,
where $f\in\Q[X]$ is characterized by
$$
\deg(f) < n+m, \quad f\equiv X \bmod X(1-X), \quad f'\equiv 0 \bmod (g,g'),
$$
where $(g,g') = X^{m-1}(1-X)^{n-1}$.
Note that from $y(1-y)=0$ it follows that
$X^m(1-X)^n$ divides $f(1-f)$, and because the
condition $f\equiv X \bmod X(1-X)$ implies $\gcd(1-f,X)=1$
and $\gcd(f,1-X)=1$, one obtains
$$
f\equiv 0 \bmod X^m, \qquad 1-f\equiv 0 \bmod (1-X)^n.
$$
It turns out that in this case we can give explicit formulas
for the polynomial $f$. 
These are contained in the following
result, which is in fact valid for any ring,
commutative or not, and in which
$m=0$, $n=0$ are also allowed.

\begin{thm}
\label{exintro}
Let $R$ be a  
ring, and $m,\n\in\Z_{\ge 0}$.
Then there is a unique polynomial $f\in R[X]$ satisfying
$$
\deg(f) < m+\n, \quad f \in R[X]\cdot X^m, \quad f \in 1+ R[X]\cdot (1-X)^\n,
$$
and it is given by
$$
f= \sum_{i=m}^{m+\n-1}{{m+\n-1}\choose{i}}X^i(1-X)^{m+\n-1-i}
=  \sum_{i=m}^{m+\n-1}(-1)^{i-m}{{m+\n-1}\choose{i}}{{i-1}\choose{i-m}}X^i.
$$
This polynomial satisfies $f^2\equiv f\bmod R[X]\cdot X^m(1-X)^n$.
\end{thm}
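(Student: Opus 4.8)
The plan is to handle existence, uniqueness, and the idempotency congruence as three separate matters, attacking existence first by guessing the answer and verifying it, since the key algebraic identity lies just beneath the surface. The natural starting point is the binomial expansion of $1 = \bigl(X + (1-X)\bigr)^{m+n-1}$ in $R[X]$. Splitting the sum $\sum_{i=0}^{m+n-1}\binom{m+n-1}{i}X^i(1-X)^{m+n-1-i}$ at the index $i=m$ gives $1 = f + g$, where $f = \sum_{i=m}^{m+n-1}\binom{m+n-1}{i}X^i(1-X)^{m+n-1-i}$ is the claimed polynomial and $g = \sum_{i=0}^{m-1}\binom{m+n-1}{i}X^i(1-X)^{m+n-1-i}$. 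Every term of $f$ is visibly divisible by $X^m$ (since $i\ge m$), so $f\in R[X]\cdot X^m$; every term of $g$ is divisible by $(1-X)^n$ (since $m+n-1-i \ge n$ when $i\le m-1$), so $g\in R[X]\cdot(1-X)^n$, whence $f = 1 - g \in 1 + R[X]\cdot(1-X)^n$. The degree bound $\deg(f) < m+n$ is immediate from the range of $i$. The second displayed formula for $f$ is just a reindexing: expand $(1-X)^{m+n-1-i}$ by the binomial theorem, collect the coefficient of a fixed power of $X$, and recognize the resulting sum of products of binomial coefficients via the Vandermonde-type identity $\sum_j \binom{m+n-1}{i+j}\binom{i+j}{j}(-1)^j$ — I would present this as a routine computation rather than grind it out.

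For uniqueness, suppose $f_1, f_2$ both satisfy the three conditions. Their difference $h = f_1 - f_2$ lies in $R[X]\cdot X^m \cap R[X]\cdot(1-X)^n$ and has degree $< m+n$. Since $X^m$ and $(1-X)^n$ are monic and coprime in the sense that there are $u,v\in R[X]$ with $uX^m + v(1-X)^n = 1$ (for any ring $R$, as this is an integer-coefficient Bézout relation — indeed exactly the relation $f + g = 1$ from the previous paragraph up to adjusting), the intersection equals $R[X]\cdot X^m(1-X)^n$. Any nonzero element of that ideal has degree $\ge m+n$, forcing $h = 0$. I should take a moment to note that "coprime" here must be argued over an arbitrary, possibly noncommutative ring; the cleanest route is to observe that the Bézout identity has integer coefficients and hence holds in $\Z[X]$, so it maps into $R[X]$, and that the ideal intersection claim follows by multiplying $h = aX^m = b(1-X)^n$ on the appropriate side by $u$ and $v$.

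For the idempotency, work modulo the two-sided ideal $R[X]\cdot X^m(1-X)^n$. From $f + g = 1$ we get $f^2 = f(1-g) = f - fg$. Now $f$ is a multiple of $X^m$ and $g$ is a multiple of $(1-X)^n$, so $fg$ is a multiple of $X^m(1-X)^n$, hence $fg \equiv 0$ and $f^2 \equiv f$. (Here one should check that $X^m$ and $(1-X)^n$ commute with each other and with everything needed, which is clear since they are polynomials in the single central-enough variable $X$ — more precisely, $X$ commutes with elements of $R$ only if we are careful, but since $f$ and $g$ are polynomials in $X$ with coefficients that are integers times $1$, no commutativity issue arises at all; I would spell this out.) The main obstacle, modest as it is, is the noncommutative bookkeeping: making sure every divisibility and coprimality statement is phrased so that it survives base change from $\Z[X]$ to $R[X]$, and that "$R[X]\cdot X^m$" versus "$X^m\cdot R[X]$" does not cause trouble — which it does not, precisely because $X^m$ and $(1-X)^n$ have integer coefficients and thus are central in $R[X]$.
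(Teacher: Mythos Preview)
Your existence and uniqueness arguments are essentially the paper's: split $1 = (X+(1-X))^{m+n-1}$ into $f+g$, read off the divisibilities, and for uniqueness write $h = hg + hf$ using centrality of $X^m$ and $(1-X)^n$ to land $h$ in $R[X]\cdot X^m(1-X)^n$. Your idempotency argument via $f^2 = f - fg$ is slightly more direct than the paper's, which instead observes that the remainder of $f^2$ upon division by the monic polynomial $X^m(1-X)^n$ satisfies the same three conditions as $f$ and then invokes uniqueness. The genuine point of departure is the second closed-form formula. You propose expanding $(1-X)^{m+n-1-i}$ and collapsing the resulting double sum; this does work---after the trinomial revision $\binom{m+n-1}{i}\binom{m+n-1-i}{j-i}=\binom{m+n-1}{j}\binom{j}{i}$ one is left with the partial alternating sum $\sum_{\ell=0}^{j-m}(-1)^\ell\binom{j}{\ell} = (-1)^{j-m}\binom{j-1}{j-m}$---but it is not ``just a reindexing'', and the identity you gesture at is not Vandermonde. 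The paper instead passes to $\Q[X]$, notes that $f'$ is divisible by both $X^{m-1}$ and $(1-X)^{n-1}$, hence by degree equals $q\cdot X^{m-1}(1-X)^{n-1}$ for a constant $q$, pins down $q$ from the bottom coefficient of $f$, and reads the coefficients $c_i$ of $f$ off the binomial expansion of $f'$; the resulting identity in $\Z[X]$ then maps to $R[X]$. That derivative trick is the thematically apt one here, echoing Theorem~\ref{ggprthmintro}. One small gap on your side: the relation $f+g=1$ requires $m+n\ge 1$, so the case $m=n=0$ (where $f=0$) should be disposed of separately, as the paper does with the cases $n=0$ and $m=0$.
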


For the proof of Theorem \ref{exintro} 
see section \ref{thm1678pfsect}.
For $m=n=2$ one obtains $f=3X^2-2X^3$, a polynomial that is
well known in this context (cf.\ \cite{BBC96}, section 3.2).

An {\em idempotent} of a commutative ring $R$ is an element
$e\in R$ such that $e^2=e$.
In addition to providing an example illustrating Theorem \ref{ggprthmintro},
Theorem \ref{exintro} gives an explicit formula for
lifting idempotents against nilpotents in commutative rings;
that is, if $R$ is a commutative ring with nilradical $\sqrt{0}$,
and $\alpha+\sqrt{0}$ is an idempotent in the ring $R/\sqrt{0}$, then
there is a unique $y\in \alpha+\sqrt{0}$ that is an idempotent in $R$,
and the following result tells us how to write it down.

\begin{thm}
\label{exthmintro}
Let $R$ be a commutative ring, suppose $m,\n\in\Z_{\ge 0}$, and suppose
$\alpha\in R$ satisfies 
$$
\alpha^m(1-\alpha)^\n=0.
$$
Then there is a unique  $y\in R$ satisfying 
$$
y-\alpha \text{ is nilpotent}, \quad y^2=y,
$$
and it is given by $y=f(\alpha)$, with $f$ as in Theorem \ref{exintro}.
\end{thm}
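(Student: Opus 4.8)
The plan is to take $y = f(\alpha)$, where $f$ is the polynomial of Theorem \ref{exintro} (its coefficients being integers, $f$ has a canonical image in $R[X]$), and to verify the two properties required of $y$ together with its uniqueness. Idempotency is immediate: writing $f = X^m g$ and $1 - f = (1-X)^\n h$ with $g, h \in R[X]$ --- which is exactly what the conditions $f \in R[X]\cdot X^m$ and $f \in 1 + R[X]\cdot(1-X)^\n$ of Theorem \ref{exintro} say --- and evaluating at $\alpha$ gives $y(1-y) = g(\alpha)h(\alpha)\,\alpha^m(1-\alpha)^\n = 0$, so $y^2 = y$; alternatively one quotes the last assertion of Theorem \ref{exintro} and evaluates at $\alpha$.

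For the nilpotence of $y - \alpha$, the key remark is that $\beta := \alpha(1-\alpha) = \alpha - \alpha^2$ is itself nilpotent, since $\beta^{m+\n} = \alpha^{m+\n}(1-\alpha)^{m+\n} = \alpha^\n(1-\alpha)^m\cdot\alpha^m(1-\alpha)^\n = 0$. When $m, \n \ge 1$ I would then show that $f(X) - X$ is divisible by $X(1-X)$ in $R[X]$: from $m \ge 1$ we get $f \in R[X]\cdot X$, hence $f(0) = 0$, and from $\n \ge 1$ we get $1 - f \in R[X]\cdot(1-X)$, hence $f(1) = 1$; thus $f$ and $X$ have the same images in $R[X]/(X)$ and in $R[X]/(1-X)$, and since the ideals $(X)$ and $(1-X)$ are comaximal ($X + (1-X) = 1$) the Chinese remainder theorem forces $f \equiv X \bmod X(1-X)$. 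Writing $f(X) - X = X(1-X)q(X)$ then yields $y - \alpha = \beta\,q(\alpha)$, whence $(y-\alpha)^{m+\n} = 0$. The cases $m = 0$ and $\n = 0$ are read off the explicit formula for $f$: if $m = 0$ then $f = (X + (1-X))^{\n - 1} = 1$ (or $R$ is the zero ring), so $y - \alpha = 1 - \alpha$ with $(1-\alpha)^\n = 0$; if $\n = 0$ then $f = 0$, so $y - \alpha = -\alpha$ with $\alpha^m = 0$.

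Uniqueness is the usual ``idempotents do not deform'' argument: if $y'$ is another element with $y' - \alpha$ nilpotent and $(y')^2 = y'$, then $y - y' = (y - \alpha) - (y' - \alpha)$ is a difference of nilpotents, hence nilpotent (the nilradical of a commutative ring is an ideal), whereas expanding with $y^2 = y$ and $(y')^2 = y'$ shows $(y - y')^3 = y - y'$; a nilpotent element equal to its own cube is $0$ (iterate $\nu = \nu^3$), so $y' = y = f(\alpha)$.

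The only step I expect to require genuine care is the nilpotence of $y - \alpha$ uniformly in $m, \n \ge 0$; the comaximality argument handles $m, \n \ge 1$, and the two degenerate cases collapse onto the hypothesis. If one prefers not to produce the explicit exponent $m + \n$, an alternative is to pass to the reduced ring $R/\sqrt{0_R}$, in which $\alpha$ becomes idempotent by the remark above, and to use that $f$ evaluated at an idempotent $\bar\alpha$ equals $f(0) + (f(1) - f(0))\bar\alpha$, which one checks is $\bar\alpha$; this exhibits $y - \alpha$ in $\sqrt{0_R}$ directly.
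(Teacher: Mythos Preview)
Your proof is correct and follows essentially the same approach as the paper's. The only cosmetic difference is in the nilpotence step for $m,n\ge 1$: you prove $f(X)\equiv X\bmod X(1-X)$ in $R[X]$ via the Chinese remainder theorem and then evaluate at $\alpha$, whereas the paper works directly in $R$, showing $y-\alpha\in R\alpha\cap R(1-\alpha)$ and then using $1=\alpha+(1-\alpha)$ to deduce $y-\alpha\in R\alpha(1-\alpha)$; these are the same comaximality argument carried out at two different levels. The idempotence and uniqueness arguments (the $(y-y')^3=y-y'$ trick) are identical to the paper's.
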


For the proof, see section \ref{thm1678pfsect}.

In principle one can use Theorems \ref{exintro} and  \ref{exthmintro}
for algorithmic purposes, but we shall not do so in the present paper.

\subsection*{Primitive elements}

Let $E$ be a $\Q$-algebra. 
For $\alpha\in E$, we denote by $\Q[\alpha]$
the subalgebra of $E$ generated by $\alpha$; it is the image of the
ring homomorphism $\Q[X] \to E$ sending $f\in\Q[X]$ to $f(\alpha)\in E$.
We call $\alpha\in E$ a {\em primitive element} for $E$ if $\Q[\alpha]=E$.

\begin{thm}
\label{primeltthminto}
For any $\Q$-algebra $E$, the subalgebra $E_\sep$ has a
primitive element. In addition,
there is a deterministic polynomial-time algorithm (Algorithm \ref{primeltalgor} below)
that, given a $\Q$-algebra $E$, 
produces $\alpha\in E$ with $E_\sep=\Q[\alpha]$.
\end{thm}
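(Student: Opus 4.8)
The plan is to reduce to the structured setting provided by Theorem~\ref{Ewellknownthm0ii}. First I would run the algorithm of that theorem to obtain a $\Q$-basis for $E_\sep$ and the matrices realizing $E_\sep \oplus \sqrt{0} \isom E$; this lets us work inside $E_\sep$ directly, so from now on we may assume $E = E_\sep$, i.e.\ $E$ is a finite-dimensional separable (hence \'etale) $\Q$-algebra, and the task is to produce a primitive element and to prove one exists. The existence half is the classical primitive element theorem for \'etale algebras: $E_\sep$ is a finite product $\prod_{i} K_i$ of finite separable (here: characteristic zero, so automatically separable) field extensions $K_i/\Q$, each $K_i$ has a primitive element $\theta_i$ over $\Q$ because $\Q$ is infinite, and a suitable $\Q$-linear combination of the $\theta_i$ (pushed into the product) generates the whole product, again because $\Q$ is infinite and there are only finitely many proper subalgebras to avoid. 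I would phrase this avoidance argument via the fact that for $\alpha \in E$ one has $\Q[\alpha] = E$ iff the $n$ elements $1,\alpha,\alpha^2,\dots,\alpha^{n-1}$ are $\Q$-linearly independent, which is a single non-vanishing-of-a-determinant (a polynomial in the coordinates of $\alpha$) condition; since that polynomial is not identically zero, it has a $\Q$-rational non-zero value.

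For the algorithmic half, the key point is to make the "suitable linear combination" effective without factoring $E$ into fields (which we are not assuming we can do cheaply) and without searching over an a priori unbounded set. Here is the approach I would take. Work in $E_\sep$, of dimension $d \le n$. For a candidate $\alpha \in E_\sep$, whether $\alpha$ is primitive is decided in polynomial time by forming the $d\times d$ matrix with columns $1,\alpha,\dots,\alpha^{d-1}$ (coordinates on the chosen basis of $E_\sep$) and testing invertibility. Now consider $\alpha = \sum_{j=1}^{d} t_j b_j$ where $b_1,\dots,b_d$ is the computed basis of $E_\sep$ and the $t_j$ are indeterminates: the determinant $D(t_1,\dots,t_d)$ above is a polynomial in $\Q[t_1,\dots,t_d]$ of total degree at most $\binom{d}{2}$ in each batch, and it is \emph{not} the zero polynomial precisely because $E_\sep$ has a primitive element (existence half). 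So it suffices to find a $\Q$-point where $D$ does not vanish. This is a standard polynomial-identity situation: by the Schwartz--Zippel lemma, if we let each $t_j$ range independently over any fixed set $S \subset \Z$ of size exceeding $\deg D$, at least one tuple gives $D \ne 0$; taking $S = \{0,1,\dots,\deg D\}$ and enumerating tuples in, say, lexicographic order, we are guaranteed to hit a non-vanishing point, and — crucially — we can argue deterministically that we need only try polynomially many tuples by peeling off one variable at a time: fix $t_1$ to the least value in $S$ for which $D(t_1,t_2,\dots,t_d)$ is not identically zero as a polynomial in $t_2,\dots,t_d$ (such a value exists and is found by at most $|S|$ invertibility tests after substituting generic remaining values, or more cleanly by testing all $|S|$ substitutions and keeping one that leaves a non-identically-zero remainder), then recurse. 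This yields $\alpha \in E_\sep$ with the $1,\alpha,\dots,\alpha^{d-1}$ independent over $\Q$, hence $\Q[\alpha] = E_\sep$, in polynomial time.

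I would then lift $\alpha$ back through the isomorphism $E_\sep \oplus \sqrt{0} \isom E$ of Theorem~\ref{Ewellknownthm0ii} to view it as an element of $E$, and output it together with (if wanted) the relation matrix expressing $1,\alpha,\dots,\alpha^{d-1}$ as a basis of $E_\sep$. Correctness is exactly the two halves above; the running-time bound is the accumulation of polynomially many determinant computations over $\Q$, where one must invoke the usual care that the rational numbers appearing do not blow up — this is controlled because all the matrices have entries that are fixed polynomial expressions of bounded degree in the (bounded-height) structure constants and the small integer values chosen for the $t_j$, so standard bounds on the bit-size of determinants of such matrices apply.

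The main obstacle I anticipate is the derandomization: turning the Schwartz--Zippel existence statement into a genuinely deterministic polynomial-time search. The naive grid $S^d$ is exponentially large, so the variable-by-variable peeling argument must be carried out carefully — in particular, deciding whether $D(t_1=c, t_2,\dots,t_d)$ is identically zero must itself be done efficiently, which one handles by the same trick one level down (it is identically zero iff it vanishes on $S^{d-1}$... which is again too big) — so the cleanest route is probably to avoid multivariate identity testing altogether and instead use a one-parameter family: set $\alpha(t) = \sum_{j} t^{\,c_j} b_j$ for a fixed rapidly-growing exponent sequence $c_j$ (e.g.\ $c_j = (d-1)^{\,j-1}$), reducing $D$ to a \emph{univariate} polynomial $D(t) \in \Q[t]$ of degree bounded by a polynomial in $d$; then one simply evaluates at $t = 0,1,2,\dots$ until a non-zero value is found, which happens within $\deg D(t) + 1$ steps. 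Showing that this univariate specialization still makes $D$ not identically zero — equivalently, that the monomials surviving in the multivariate $D$ are separated by the exponent choice — is the one genuinely delicate point, and is the step I would expect to spend the most care on; it is a Kronecker-substitution / distinct-exponents argument, and it is where the proof's real content lies beyond the classical primitive element theorem.
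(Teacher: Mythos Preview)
Your existence argument is fine and essentially classical. The algorithmic half, however, has a real gap that you have not closed. You correctly flag derandomization as the obstacle, but neither of your two proposed fixes works. The variable-by-variable peeling requires you to decide, for each candidate value of $t_1$, whether $D(t_1,t_2,\dots,t_d)$ is identically zero in the remaining variables; that is multivariate polynomial identity testing, and your parenthetical suggestions (``substituting generic remaining values'' or ``testing all $|S|$ substitutions'') are circular --- they presuppose exactly the kind of good point you are trying to find. Your Kronecker substitution $t_j = t^{(d-1)^{j-1}}$ does kill the identity-testing issue, but the claim that the resulting univariate polynomial has degree ``bounded by a polynomial in $d$'' is simply false: already $\alpha(t)$ has a term $t^{(d-1)^{d-1}}b_d$, so $\alpha(t)^{d-1}$ contributes degree $(d-1)^d$, and your search over $t=0,1,2,\dots$ up to $\deg D(t)$ takes exponentially many steps. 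Any exponent sequence large enough to separate all monomials of $D$ forces exponential degree; any sequence small enough to keep the degree polynomial no longer guarantees $D(t)\not\equiv 0$.

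The paper sidesteps the whole Schwartz--Zippel framework by building the primitive element \emph{incrementally}, adjoining one basis element at a time. The key ingredient is Proposition~\ref{primeltprop}: if $\alpha,\beta\in E_\sep$ are integral over $\Z$ with $\Delta(f_\alpha)$ the discriminant of the minimal polynomial of $\alpha$, then $\Q[\alpha,\beta]=\Q[\alpha+d\beta]$ for any positive integer $d$ with $d^2\nmid\Delta(f_\alpha)$; such a $d$ exists and is at most about $\tfrac12\log|\Delta(f_\alpha)|$. Iterating over a basis of $E_\sep$ (after clearing denominators to make the basis integral) gives Algorithm~\ref{primeltalgor} and Theorem~\ref{primeltcor}. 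So each step involves computing one discriminant and one small integer, with no search over a grid and no identity testing. Your approach can in fact be repaired along the same incremental lines without discriminants: at each step one has $\Q[\gamma+cb]=\Q[\gamma,b]$ for all but at most $\binom{\dim_\Q\Q[\gamma,b]}{2}<d^2$ values of $c\in\Q$ (the classical count of bad $c$'s via pairs of embeddings), so trying $c=0,1,\dots,d^2$ and checking $\dim_\Q\Q[\gamma+cb]$ succeeds in polynomial time. That is the missing idea --- reduce to a \emph{one}-parameter problem at each stage rather than a $d$-parameter problem all at once.
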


For the proof of Theorem \ref{primeltthminto}, including the algorithm,
see section \ref{primeltsnewsect}.
We will find Theorem \ref{primeltthminto} useful in determining
$\Spec(E)$.

General $\Q$-algebras do not need to have primitive elements
(see Example \ref{noprimeltexample}).

\begin{thm}
\label{equivthminto}
\begin{enumerate}[leftmargin=5mm]
\item
For any $\Q$-algebra $E$, the following four statements are equivalent:
\begin{enumerate}
\item
$E$ has a primitive element;
\item
for each $\m\in\Spec(E)$, the $E/\m$-vector space $\sqrt{0}/\m\sqrt{0}$
has dimension at most $1$;
\item
$\sqrt{0}$ is a principal ideal of $E$;
\item
each ideal of $E$ is a principal ideal.
\end{enumerate}
\item
There is a deterministic polynomial-time algorithm 
that, given a $\Q$-algebra $E$, decides whether $E$ has a primitive element,
and if so finds one.
\end{enumerate}
\end{thm}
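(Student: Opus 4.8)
The plan is to prove the equivalence of (i)(a)–(d) by establishing a cycle of implications, then derive the algorithmic statement (ii) from the proof of (i).

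For the chain of equivalences, I would first use Theorem \ref{Ewellknownthm0i} to reduce to understanding how a primitive element interacts with the decomposition $E = E_\sep \oplus \sqrt{0}$. For (a) $\Rightarrow$ (c): if $\alpha$ is primitive, decompose $\Spec(E)$; since $E_\sep$ always has a primitive element by Theorem \ref{primeltthminto}, the real content is about $\sqrt{0}$ as a module. I would show $\sqrt{0}$ is generated by the nilpotent part of $\alpha$ (the image of $\alpha$ under $E \to \sqrt{0}$), using that $\Q[\alpha] = E$ forces $\alpha$ to ``see'' all of $\sqrt{0}$; concretely, modulo $\sqrt{0}^2$ one checks that powers of $\alpha$ already span, so $\sqrt{0}/\sqrt{0}^2$ is generated by one element over $E/\sqrt{0}$, and then Nakayama (applied factor-by-factor after the CRT decomposition of the semisimple ring $E/\sqrt{0}$, or over the semilocal ring $E$ directly using that $\sqrt{0}$ is nilpotent) upgrades this to $\sqrt{0}$ being principal. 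For (c) $\Leftrightarrow$ (b): principality of $\sqrt{0}$ is, by Nakayama over each local ring $E_\m$, equivalent to $\sqrt{0}/\m\sqrt{0}$ having dimension $\le 1$ over $E/\m$ for every $\m \in \Spec(E)$ — here one uses that $E$ is Artinian, so $E \cong \prod_\m E_\m$ and principality can be checked locally. For (c) $\Rightarrow$ (d): an arbitrary ideal $I$ of the Artinian ring $E$ decomposes via CRT, and within each local factor $E_\m$ one filters $I$ by powers of $\m = \sqrt{0_{E_\m}}$; principality of the maximal ideal of a local Artinian ring makes it a chain ring, so every ideal is a power of the maximal ideal, hence principal. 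Finally (d) $\Rightarrow$ (a): if every ideal is principal, then in particular $\sqrt{0}$ is principal, say $\sqrt{0} = E\nu$; combining a primitive element $\beta$ for $E_\sep$ (lift it to $E$) with $\nu$, I would verify that a generic $\Q$-linear combination $\alpha = \beta + \nu$ — or, if a prime appears with multiplicity, a suitable adjustment — generates $E$, by checking $\Q[\alpha]$ surjects onto $E/\sqrt{0} = E_\sep$ and also hits $\nu$ modulo higher powers, then inducting on the nilpotency index.

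For the algorithmic statement (ii), I would run the following: use Theorem \ref{Ewellknownthm0ii} to compute $\Q$-bases of $E_\sep$ and $\sqrt{0}$ and the decomposition isomorphism. To test condition (c) — principality of $\sqrt{0}$ — compute $\sqrt{0}^2$ (a polynomial-time linear-algebra operation: span of products of basis vectors), form the $E/\sqrt{0}$-module $\sqrt{0}/\sqrt{0}^2$, and check whether it is generated by one element. Since $E/\sqrt{0} = E_\sep$ is a product of number fields whose primitive element is produced by Theorem \ref{primeltthminto}, deciding cyclicity of a finitely presented module over such a ring is linear algebra over $\Q$ once one has split into the factor fields — but to avoid factoring integers I would instead work over $E_\sep$ itself via its primitive element and reduce to a Smith-normal-form / rank computation over the ring $\Q[\alpha]$, which is again polynomial-time by standard module-theoretic linear algebra over a product of fields presented as $\Q[X]/(g)$. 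If $\sqrt{0}/\sqrt{0}^2$ is not cyclic, report ``no primitive element''; otherwise recover a generator $\nu$ of $\sqrt{0}$ (lift a cyclic generator mod $\sqrt{0}^2$ and confirm, by another linear-algebra check, that it generates $\sqrt{0}$ — this must succeed by Nakayama), take the primitive element $\alpha_0$ of $E_\sep$ from Theorem \ref{primeltthminto}, lift it to $E$, and output $\alpha = \alpha_0 + \nu$ (adjusting by small integer multiples of $\nu$ across the idempotent components of $E_\sep$ if needed to kill collisions). Finally verify $\Q[\alpha] = E$ by computing the rank of the matrix with columns $1, \alpha, \alpha^2, \dots, \alpha^{n-1}$; by the proof of (i) this rank is $n$, confirming success.

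I expect the main obstacle to be the implications (a) $\Rightarrow$ (c) and (d) $\Rightarrow$ (a): getting the Nakayama bookkeeping right across the semilocal structure of $E$, and, for (d) $\Rightarrow$ (a), handling the case where several prime ideals have the same residue field and isomorphic local rings, so that a naive common lift of primitive elements fails to separate the factors — this is exactly the classical subtlety in the primitive element theorem for products of fields, and it is what forces the ``generic linear combination, adjusted across components'' step rather than a single clean formula. The algorithmic side inherits this difficulty only mildly, since the final rank check is self-certifying: whatever adjustment is needed, a polynomial-time search over a bounded family of candidates (or an explicit construction mirroring the proof) succeeds, and correctness is confirmed by the rank-$n$ test.
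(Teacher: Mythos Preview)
Your approach is essentially correct but takes a longer route than the paper on part (i), and your anticipated obstacle in (d)$\Rightarrow$(a) turns out not to exist.

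The paper's cycle is (a)$\Rightarrow$(d)$\Rightarrow$(c)$\Rightarrow$(b)$\Rightarrow$(a). The implication (a)$\Rightarrow$(d) is a single line: if $E=\Q[\alpha]$ then $E\cong\Q[X]/(g)$ is a quotient of a PID, so every ideal is principal. This immediately subsumes both your (a)$\Rightarrow$(c) via Nakayama and your separate (c)$\Rightarrow$(d) via the chain-ring structure of local Artinian rings with principal maximal ideal; those arguments are correct but unnecessary. For the hard direction the paper proves (b)$\Rightarrow$(a) via a lemma (Lemma~\ref{primeltcriterion}) that is close to your (d)$\Rightarrow$(a), but with one crucial simplification: no adjustment of $\alpha_0+\nu$ is ever needed. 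If $f$ is the (separable) minimal polynomial of the primitive element $\alpha_0$ of $E_\sep$, then $f(\alpha_0+\nu)\equiv f'(\alpha_0)\,\nu\bmod\sqrt{0}^2$ with $f'(\alpha_0)\in E^\ast$, so $\Q[\alpha_0+\nu]$ already surjects onto $E/\sqrt{0}$ and hits a generator of $\sqrt{0}/\sqrt{0}^2$; an induction on the nilpotency index finishes. The ``collision'' issue you worry about---several primes with the same residue field---cannot occur, because $\alpha_0$ being primitive for $E_\sep$ means its images in the various $E/\m$ have pairwise distinct minimal polynomials over $\Q$.

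For (ii), the paper's algorithm is more direct than yours: it computes $\Spec(E)$ explicitly (LLL factorization of the minimal polynomial of $\alpha_0$), tests condition (b) by comparing $\dim_\Q(\sqrt{0}/\m\sqrt{0})$ with $\dim_\Q(E/\m)$ at each $\m$, and if the test passes outputs $\alpha_0+\varepsilon$ for a single lift $\varepsilon$ of any tuple $(\varepsilon_\m)_\m$ of local generators. Since polynomial factorization over $\Q$ is already polynomial-time, there is no need to avoid it by working with Smith forms over $\Q[X]/(f)$; and since $\alpha_0+\varepsilon$ is guaranteed to work, no search or verification step is required.
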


For the proof of (i)
see section \ref{primeltsnewsect}.
For (ii) and the algorithm see section \ref{thm13pfsect}.

\subsection*{The spectrum and idempotents}

The spectrum $\Spec(R)$ of a commutative ring $R$ is its set of prime ideals.
For $\m\in\Spec(R)$, we write $R_\m$ for the localization of $R$ at $\m$.
A {\em primitive idempotent} of $R$ is an idempotent
$e\neq 0$ of $R$ such that for all idempotents $e'\in R$ we have $e'e\in\{0,e\}$.

The following result assembles basic structural information about $\Q$-algebras.

\begin{thm}
\label{Ewellknownthm}
Let $E$ be a $\Q$-algebra. Then:
\begin{enumerate}
\item
for each $\m\in\Spec(E)$, the local ring $E_\m$ is a $\Q$-algebra
with a nilpotent maximal ideal, its residue class field is $E/\m$,
and $E/\m$ is a field extension of $\Q$ of finite degree;
\item
$\Spec(E)$ is finite, and the natural map $E \to \prod_{\m\in\Spec(E)}E_\m$
is an isomorphism of $\Q$-algebras;
\item
the natural map $E \to \prod_{\m\in\Spec(E)}E/\m$
is surjective, and its kernel equals the nilradical $\sqrt{0}$;
\item
the restriction
of the map in {\rm (iii)} to $E_\sep$ is a $\Q$-algebra isomorphism
$$E_\sep \isom \prod_{\m\in\Spec(E)}E/\m.$$
\end{enumerate}
\end{thm}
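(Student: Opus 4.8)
The plan is to derive everything from the single elementary fact that $E$, being finite-dimensional over $\Q$, has the descending and ascending chain conditions on ideals, and then to produce the product decomposition by lifting through the isomorphism of Theorem~\ref{Ewellknownthm0i} the evident idempotents of a finite product of fields.

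I would begin with two routine observations. For a prime $\p$ of $E$, the domain $E/\p$ is finite-dimensional over $\Q$, and for $0\neq\bar{a}\in E/\p$ multiplication by $\bar{a}$ is an injective, hence bijective, $\Q$-linear endomorphism of $E/\p$; thus $E/\p$ is a field, $\p$ is maximal, and $E/\p$ is a finite field extension of $\Q$. Next, distinct maximal ideals are pairwise comaximal, so for any finitely many distinct $\m_1,\dots,\m_k\in\Spec(E)$ the Chinese Remainder Theorem gives $E/(\m_1\cap\cdots\cap\m_k)\isom\prod_{i=1}^k E/\m_i$; the right side has $\Q$-dimension at least $k$ and is a quotient of $E$, so $k\le\dim_\Q E$ and $\Spec(E)$ is finite. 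Writing $\Spec(E)=\{\m_1,\dots,\m_r\}$, the same isomorphism together with $\sqrt{0}=\bigcap_{\p}\p=\bigcap_i\m_i$ is statement~(iii). For (iv), by Theorem~\ref{Ewellknownthm0i} the inclusion $E_\sep\subseteq E$ is a homomorphism of $\Q$-algebras and $E=E_\sep\oplus\sqrt{0}$; hence the composite $E_\sep\hookrightarrow E\to E/\sqrt{0}$, which is the restriction of the map in (iii), is injective because $E_\sep\cap\sqrt{0}=0$, and surjective because $E=E_\sep+\sqrt{0}$.

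For (i) and (ii) I would transport the orthogonal idempotents $\varepsilon_\m\in\prod_{\m}E/\m$ (the identity in the $\m$-coordinate, zero elsewhere) back along the isomorphism of (iv) to orthogonal idempotents $e_\m\in E_\sep\subseteq E$ with $\sum_{\m}e_\m=1$; by construction $e_\m$ has image $1$ in $E/\m$ and image $0$ in $E/\m'$ for $\m'\neq\m$, so $\m$ is the unique prime of $E$ not containing $e_\m$. This realizes $E$ as the product of rings $\bigoplus_{\m}e_\m E$. Each $e_\m E$ is a $\Q$-algebra, being a direct summand of $E$, and $\Spec(e_\m E)$ is naturally the set of primes of $E$ not containing $e_\m$, namely the single prime $\m$; thus $e_\m E$ is local, its maximal ideal $\m_0$ pulls back to $\m$ along $E\to e_\m E$, and $\m_0$ is nilpotent, since the chain $\m_0\supseteq\m_0^2\supseteq\cdots$ stabilizes for dimension reasons and Nakayama's lemma forces the stable value to be $0$. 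Finally I would identify $e_\m E$ with $E_\m$: every element of $E\setminus\m$ maps to a unit of the local ring $e_\m E$, so the localization map $E\to E_\m$ factors through $e_\m E$ by the universal property of localization; conversely $e_\m$ maps to a unit of $E_\m$, so the relations $e_{\m'}e_\m=0$ for $\m'\neq\m$ show that $1-e_\m=\sum_{\m'\neq\m}e_{\m'}$ dies in $E_\m$, whence $E\to E_\m$ factors through $e_\m E=E/(1-e_\m)E$; the two resulting maps are mutually inverse by the uniqueness clauses in the universal properties of localization and of quotient. This yields $E\isom\prod_{\m}E_\m$ compatibly with the natural maps, proving (ii); and transporting structure through $E_\m\cong e_\m E$ gives the rest of (i): $E_\m$ is a local $\Q$-algebra whose maximal ideal $\m E_\m$ is nilpotent and whose residue field $E_\m/\m E_\m\cong e_\m E/\m_0\cong E/\m$ is finite over $\Q$.

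I expect the Chinese Remainder Theorem bookkeeping and the idempotent manipulations to be entirely routine; the point demanding the most care is the identification $e_\m E\cong E_\m$, together with the verification that all the isomorphisms so produced are the natural maps named in the statement, so that (ii), (iii), and (iv) really are assertions about those maps and not merely about abstractly isomorphic objects.
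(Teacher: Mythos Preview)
Your argument is correct, but it follows a different path from the paper's. The paper proves all four parts simultaneously by induction on $\dim_\Q E$: in the inductive step, either every element of $E_\sep$ has irreducible minimal polynomial (so $E_\sep$ is a field, $\sqrt{0}$ is the unique prime, and the local case is handled directly), or some $\alpha\in E_\sep$ has a reducible minimal polynomial, in which case a nontrivial idempotent of $\Q[\alpha]$ splits $E$ as a product of two strictly smaller algebras and a product lemma (Lemma~\ref{thm12validfor2}) closes the induction.

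By contrast, you order the argument linearly: first (iii) via the Chinese Remainder Theorem and $\sqrt{0}=\bigcap_\m\m$, then (iv) as an immediate consequence of Theorem~\ref{Ewellknownthm0i}, and only then (i)--(ii) by pulling back the full system of orthogonal idempotents along the isomorphism of (iv) and identifying $e_\m E$ with $E_\m$ via universal properties. This avoids induction entirely and makes transparent why Theorem~\ref{Ewellknownthm0i} is the real engine; in effect you obtain all the primitive idempotents of $E$ in one stroke, which is exactly the viewpoint the paper later adopts in Algorithm~\ref{findmalgor}. The paper's inductive proof, on the other hand, needs nothing beyond the existence of a single nontrivial idempotent at each stage, so it is slightly more self-contained and closer in spirit to how one might discover the structure by hand. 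Both arguments are standard; yours is arguably the cleaner packaging once Theorem~\ref{Ewellknownthm0i} is available.
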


Theorem \ref{Ewellknownthm} is by no means new, but
since it can be quickly obtained from Theorem \ref{Ewellknownthm0i},
we include a proof in section \ref{thm12pfsect}.

We next consider an algorithmic counterpart of Theorem \ref{Ewellknownthm}.
Part (iii) of Theorem \ref{Ewellknownthm} readily implies that
determining all $\m\in\Spec(E)$, as well as their residue class
fields $E/\m$ and the maps $E\to E/\m$, is
equivalent to determining all primitive idempotents of the ring
$E/\sqrt{0}$, in a sense that is not hard to make precise.
Likewise, by part (ii) of Theorem \ref{Ewellknownthm},
determining all localizations $E_\m$ as well as the maps
$E\to E_\m$ is equivalent to determining all primitive idempotents
of $E$ itself.

We briefly discuss the literature on these two problems.
An algorithm for finding the primitive idempotents of $E/\sqrt{0}$
was proposed in section V of \cite{GMT89}, but
it does not run in polynomial time (\cite{GMT89}, section VI);
a promised sequel to \cite{GMT89} never appeared,
and no details are available on the method for dealing with the
issue that was suggested on p.~86 in section 4 of \cite{IRS94}.
However, in section 7.2 of \cite{FR85} one does find a
polynomial-time algorithm for finding the primitive idempotents
of $E/\sqrt{0}$.
It depends on a primitive element, as does our own method.

To find the primitive idempotents of $E$, it suffices to lift
those of $E/\sqrt{0}$. In section IV of \cite{GMT89}, 
a method for doing this is proposed, based on
Hensel's lemma.
A proof that this method runs in polynomial time can be found
in section 3.2 of \cite{BBC96}; alternatively, one can apply
Theorem \ref{exthmintro}.
Our own procedure is much simpler: from the
primitive idempotents of $E/\sqrt{0}$ and the isomorphism
$E_\sep \isom E/\sqrt{0}$ one immediately obtains the primitive
idempotents of $E_\sep$, which coincide with those of $E$.

Altogether, we obtain a new proof of the following theorem.

\begin{thm}
\label{idempotalgorthm}
There is a deterministic polynomial-time algorithm that,
given a $\Q$-algebra, computes its primitive idempotents.
\end{thm}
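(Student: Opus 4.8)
The plan is to reduce the problem, in polynomial time, to factoring a single squarefree polynomial over $\Q$. The starting observation is that $E$ and its separable subalgebra $E_\sep$ have exactly the same idempotents: any idempotent $e$ of $E$ satisfies $e^2-e=0$, so it is a zero of the separable polynomial $X^2-X$ and hence lies in $E_\sep$; conversely every idempotent of the subalgebra $E_\sep$ is an idempotent of $E$. In particular $E$ and $E_\sep$ have the same primitive idempotents, so it suffices to work inside $E_\sep$. First I would invoke Theorem \ref{Ewellknownthm0ii} to compute, in polynomial time, a $\Q$-basis of $E_\sep$ together with the matrix of the embedding $E_\sep\hookrightarrow E$ on the given basis of $E$.

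Next I would apply Theorem \ref{primeltthminto} to produce, in polynomial time, an element $\alpha$ with $\Q[\alpha]=E_\sep$. Computing the successive powers of $\alpha$ in the basis of $E_\sep$ and doing linear algebra over $\Q$ yields the minimal polynomial $g\in\Q[X]$ of $\alpha$, and hence an explicit isomorphism $E_\sep\cong\Q[X]/(g)$; since $E_\sep$ is separable, $\alpha$ is separable and $g$ is squarefree. Then I would factor $g=p_1\cdots p_t$ into pairwise distinct monic irreducibles using a deterministic polynomial-time univariate factorization algorithm over $\Q$. Setting $q_i=g/p_i$ and running the extended Euclidean algorithm to write $u_ip_i+v_iq_i=1$, the elements $\bar e_i:=v_iq_i\bmod g$ correspond, under the Chinese remainder isomorphism $\Q[X]/(g)\cong\prod_{i=1}^t\Q[X]/(p_i)$, to the standard idempotents $(0,\dots,1,\dots,0)$; since each $\Q[X]/(p_i)$ is a field, these are exactly the primitive idempotents of $\Q[X]/(g)$. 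Finally, substituting $\alpha$ for $X$ and pushing forward along $E_\sep\hookrightarrow E$ gives the elements $e_i=v_i(\alpha)q_i(\alpha)\in E$ on the input basis, and by the first paragraph these are precisely the primitive idempotents of $E$.

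The only ingredient that is not elementary linear algebra, gcd computation, and bookkeeping is the univariate factorization over $\Q$ in the third step; I regard this as the principal \emph{obstacle}, although it is classical that it can be carried out in deterministic polynomial time. The remaining point requiring attention is the usual one of controlling the bit-sizes of the rational numbers produced along the way: one must check that the primitive element from Theorem \ref{primeltthminto}, its minimal polynomial, and the outputs of the Euclidean and Chinese-remainder steps all have length polynomial in the input size. This is routine, since every polynomial involved has degree at most $\dim_\Q E_\sep\le\dim_\Q E$ and the factorization algorithm has polynomially bounded output, so all the intermediate data stay of polynomial size.
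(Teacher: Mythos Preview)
Your proposal is correct and follows essentially the same approach as the paper: reduce to $E_\sep$ via the observation that $X^2-X$ is separable, find a primitive element $\alpha$ for $E_\sep$, factor its minimal polynomial using LLL, and read off the primitive idempotents from the factorization. The only cosmetic difference is that you obtain the idempotents directly via the extended Euclidean algorithm and the Chinese remainder formula $e_i=v_i(\alpha)q_i(\alpha)$, whereas the paper writes down the isomorphism $E_\sep\to\prod_\m E/\m$ as a matrix and inverts it; these are two ways of saying the same thing.
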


For the proof and the algorithms, see section \ref{thm13pfsect}.
All idempotents are obtained as sums of subsets of the set of 
primitive idempotents, but there may be too many to list in
polynomial time.

In terms of prime ideals, Theorem \ref{idempotalgorthm} reads as follows.

\begin{thm}
\label{Ewellknownalg}
There are deterministic polynomial-time algorithms that,
given a $\Q$-algebra $E$, compute all $\m\in\Spec(E)$, 
the $\Q$-algebras $E_\m$ and $E/\m$ for all $\m\in\Spec(E)$, 
as well as the natural maps 
$$
E \to E_\m, \quad E_\m \to E/\m, \quad E \to E/\m, \quad
E \to \prod_{\m\in\Spec(E)}E_\m,
\quad
E_\sep \to \prod_{\m\in\Spec(E)}E/\m,$$ 
and the inverses of the latter two maps.
\end{thm}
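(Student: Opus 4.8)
The plan is to reduce the entire statement to the two nontrivial subroutines already at our disposal — the algorithm of Theorem~\ref{Ewellknownthm0ii}, which produces $E_\sep$, $\sqrt{0}$, and the projection $\proj\colon E\to E_\sep$ along $\sqrt{0}$, and the algorithm of Theorem~\ref{idempotalgorthm}, which produces the list $e_1,\dots,e_t$ of primitive idempotents of $E$ — and then to carry out the rest with standard polynomial-time linear algebra over $\Q$. First I would run both algorithms. By the discussion preceding Theorem~\ref{idempotalgorthm} and by Theorem~\ref{Ewellknownthm}(ii)--(iv), the $e_j$ are simultaneously the primitive idempotents of $E$ and of $E_\sep$, they are pairwise orthogonal with $\sum_j e_j=1$, and $j\mapsto\m_j$, where $\m_j$ is the unique prime of $E$ not containing $e_j$, is a bijection onto $\Spec(E)$. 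This pins down $\Spec(E)$ as a finite set of size $t$; each $\m_j$ will be returned as a $\Q$-basis, e.g.\ of the ideal $e_j\sqrt{0}+(1-e_j)E$ (equivalently of $\ker(E\to E/\m_j)$ once that map has been built), the equality with $\m_j$ being immediate from Theorem~\ref{Ewellknownthm}(i)--(ii) once one notes that the maximal ideal of $E_{\m_j}$ is its nilradical, corresponding to $e_j\sqrt{0}$.

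Next, for each $j$ I would realize the required algebras and maps concretely and read off structure constants by linear algebra inside $E$ and inside $E_\sep$. The localization $E_{\m_j}$ is the ideal $e_jE$ viewed as a ring with unit $e_j$; a $\Q$-basis is obtained by Gaussian elimination on $e_je_1,\dots,e_je_n$, and multiplication in $E$ (which maps $e_jE\times e_jE$ into $e_jE$) supplies its structure constants. The residue field $E/\m_j$ is realized as the subalgebra $e_jE_\sep$ of $E_\sep$ with unit $e_j$; that this is a field canonically isomorphic to $E/\m_j$, with canonical surjection $E_\sep\to e_jE_\sep$ given by $u\mapsto e_ju$, follows from Theorem~\ref{Ewellknownthm}(iv) together with the decomposition $E_\sep=\bigoplus_j e_jE_\sep$. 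The natural maps are then $E\to E_{\m_j}$, $x\mapsto e_jx$; $E\to E/\m_j$, $x\mapsto e_j\,\proj(x)$; and $E_{\m_j}\to E/\m_j$, the restriction of the latter to $e_jE\subseteq E$, which one checks is the canonical quotient using $\sqrt{0}\subseteq\m_j$ and $\sqrt{0_{E_{\m_j}}}=e_j\sqrt{0}$. The map $E\to\prod_j E_{\m_j}$ is assembled from the components $x\mapsto e_jx$, and its inverse is $(x_j)_j\mapsto\sum_j x_j$ (valid since $\sum_j e_j=1$ and $x_j\in e_jE\subseteq E$). Likewise $E_\sep\to\prod_j E/\m_j$ has components $u\mapsto e_ju\in e_jE_\sep$, and because each $e_jE_\sep$ is already a $\Q$-subspace of $E_\sep$ its inverse is again $(w_j)_j\mapsto\sum_j w_j$. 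All of this is a bounded number of kernel, image, change-of-basis, matrix-product and matrix-inversion computations over $\Q$, so the whole procedure runs in polynomial time.

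The only delicate point is the bookkeeping: verifying that the concrete models above genuinely represent the abstractly defined localizations, residue fields, primes and structure maps of Theorem~\ref{Ewellknownalg}, and that the maps produced are mutually compatible. This is a direct application of Theorem~\ref{Ewellknownthm}, so I do not anticipate a genuine obstacle. The two substantive ingredients — splitting off the nilradical and producing the primitive idempotents — have already been handled in Theorems~\ref{Ewellknownthm0ii} and~\ref{idempotalgorthm}, and everything that remains relies on the standard fact that linear algebra over $\Q$ can be performed in polynomial time with polynomially bounded numerators and denominators.
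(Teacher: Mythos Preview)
Your argument is correct; the difference from the paper is organizational rather than mathematical. In the paper, Theorems~\ref{idempotalgorthm} and~\ref{Ewellknownalg} are proved \emph{simultaneously} by a single algorithm (Algorithm~\ref{findmalgor}): it first produces the primes of $E$ directly --- via a primitive element $\alpha$ of $E_\sep$, LLL factorization of its minimal polynomial $f$, and the identification $\m_g=g(\alpha)E_\sep+\sqrt{0}$ for each monic irreducible factor $g$ of $f$ --- and only then extracts the primitive idempotents by inverting the isomorphism $E_\sep\isom\prod_\m E/\m$. You run the dependency the other way: take the idempotents from Theorem~\ref{idempotalgorthm} as a black box and recover $\Spec(E)$, the localizations, and the residue fields from them. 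Your bookkeeping ($\m_j=e_j\sqrt{0}+(1-e_j)E$, $E/\m_j\cong e_jE_\sep$, $E_{\m_j}\cong e_jE$, with the maps and their inverses given by multiplication by $e_j$ and summation) is all correct and matches what the paper does in step~(vi) of Algorithm~\ref{findmalgor}. The only caveat is that the paper has no independent proof of Theorem~\ref{idempotalgorthm}: its sole proof \emph{is} Algorithm~\ref{findmalgor}, which already yields Theorem~\ref{Ewellknownalg} en route. So within the paper's framework your reduction does not actually shortcut anything --- the substantive work (primitive element plus polynomial factorization) is still there, just hidden inside the citation --- but as a derivation of Theorem~\ref{Ewellknownalg} from Theorems~\ref{Ewellknownthm0ii} and~\ref{idempotalgorthm} it is entirely valid.
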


For the proof and the algorithms, see section \ref{thm13pfsect}.

\subsection*{Discrete logarithms}
Theorem \ref{Ewellknownalg} contributes a useful ingredient in an
algorithm for finding roots of unity in orders that the authors
recently developed; see \cite{RoU,LenSil,LwS}.
We also proved that, given a $\Q$-algebra $E$, 
one can find, in polynomial time, generators for the group
$\mu(E)$ of roots of unity in $E$, and we presented a solution to the
discrete logarithm problem in $\mu(E)$; see \cite{RoU}.
The following result states that, in fact, the discrete logarithm problem in 
the full multiplicative group $E^\ast$ of $E$ admits an
efficient solution.

\begin{thm}
\label{Estarthm}
\begin{enumerate}[leftmargin=5mm]
\item
There is a deterministic polynomial-time algorithm 
that, given a $\Q$-algebra $E$ 
and a finite system $S$ of elements of $E$, decides whether all
elements of $S$ belong to $E^\ast$, and if so determines a finite set of
generators for the kernel of the group homomorphism
$$
\Z^S \to E^\ast, \qquad (m_s)_{s\in S} \mapsto \prod_{s\in S}s^{m_s}.$$
\item
There is a deterministic polynomial-time algorithm 
that, given a $\Q$-algebra $E$, a finite system $S$ of elements of $E^\ast$
and $t\in E^\ast$, decides whether $t$ belong to the subgroup
$\langle S\rangle$ of $E^\ast$ generated by $S$, and if so produces 
$(m_s)_{s\in S} \in \Z^S$ with $t=\prod_{s\in S}s^{m_s}$.
\end{enumerate}
\end{thm}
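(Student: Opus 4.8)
The plan is to reduce the discrete-logarithm questions for $E^\ast$ to the corresponding questions for number fields, using the structural decomposition supplied by Theorem~\ref{Ewellknownalg}, and then to invoke known polynomial-time algorithms for units and for discrete logarithms in the multiplicative groups of number fields and of their localizations. First I would run the algorithm of Theorem~\ref{Ewellknownalg} to obtain $\Spec(E)=\{\m_1,\dots,\m_r\}$, the local rings $E_{\m_j}$ with their nilpotent maximal ideals, the residue fields $K_j=E/\m_j$, and the isomorphism $E\isom\prod_{j=1}^r E_{\m_j}$ together with its inverse. Deciding whether a given element lies in $E^\ast$ is then immediate: $s\in E^\ast$ if and only if each component $s_j\in E_{\m_j}$ is a unit, which (since $E_{\m_j}$ is local with nilpotent maximal ideal) holds if and only if the image of $s_j$ in $K_j$ is nonzero; all of this is a linear-algebra check over $\Q$. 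Having disposed of that, I may assume every element of $S$ (and, in part~(ii), the target $t$) lies in $E^\ast$, and I work componentwise, since $E^\ast\cong\prod_j E_{\m_j}^\ast$.

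For each local factor $A=E_{\m_j}$ with maximal ideal $\m$ and residue field $K$, there is a split exact sequence of abelian groups $1\to U\to A^\ast\to K^\ast\to 1$, where $U=1+\m$ is a $\Q$-vector space (hence torsion-free and divisible) via the logarithm/exponential pair, which converges because $\m$ is nilpotent; a $\Q$-basis for $\m$ is already in hand, so I can compute $\log\colon U\to\m$ and its inverse explicitly as polynomial maps. Thus $A^\ast\cong K^\ast\times(\m,+)$ as computable groups, and the discrete-logarithm problems in $A^\ast$ split into a ``unit part'' living in the number field $K$ and a ``nilpotent part'' living in the $\Q$-vector space $\m$. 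In the vector-space part, both problems---finding generators for the kernel of $\Z^S\to(\m,+)$ and expressing $t$ as a $\Z$-combination---are solved by Hermite-normal-form computations over $\Z$ applied to the rational coordinate vectors $\log(s_j)$, which is classical polynomial-time linear algebra. In the number-field part, I invoke the known deterministic polynomial-time algorithms for computing the unit group of $\O_K$ and for solving multiplicative relation and membership problems in $K^\ast$ (e.g., via $S$-unit computations once the finitely many primes dividing the numerators and denominators of the norms of the elements of $S$ are factored---here the relevant $S$-unit machinery is polynomial-time because the set of primes is bounded by the input size); combining the generators of the kernels obtained on each factor (and on each of the two parts of each factor) via the standard pullback along $E^\ast\cong\prod_j(K_j^\ast\times\m_j)$ yields a finite generating set for the kernel of $\Z^S\to E^\ast$, proving~(i), and the analogous combination of witnesses, together with a consistency check across factors, settles membership and produces the exponent vector in~(ii).

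The main obstacle is the number-field part: one must be careful that ``polynomial-time'' is honest there, since naive unit-group and discrete-log algorithms in number fields are not polynomial-time in the worst case. The point is that the only inputs are the finitely many elements of $S$ (and $t$), so the relevant computations can be confined to the ring of $T$-integers for the explicit finite set $T$ of rational primes occurring in the factorizations of $\N_{K/\Q}(s)$; I would appeal to the deterministic polynomial-time algorithms for $T$-unit groups and multiplicative dependence in number fields that underlie our earlier work on roots of unity in orders (\cite{RoU}), of which the present theorem is the natural generalization from $\mu(E)$ to $E^\ast$. A secondary technical point to handle carefully is the passage between $E$ and $\prod_j E_{\m_j}$: one must track denominators so that the isomorphism and its inverse, and the logarithm maps on the $1+\m_j$, are all computed with polynomially bounded bit-size, but this is exactly the kind of bookkeeping that Theorem~\ref{Ewellknownalg} is designed to make available, so no new ideas are needed there.
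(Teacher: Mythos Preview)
Your overall architecture matches the paper's: decompose $E^\ast$ as a product of a unipotent part (handled by $\log$ and linear algebra over $\Q$) and a residue-field part indexed by $\Spec(E)$, compute the kernel on each factor, and intersect. The paper carries a single global nilpotent piece $1+\sqrt{0}$ rather than your local pieces $1+\m_j$, but since $\sqrt{0}\cong\prod_j\m_j$ under $E\cong\prod_j E_{\m_j}$ this is only a cosmetic difference, and your treatment of part~(ii) by adjoining $t$ to $S$ is exactly what the paper does in Algorithm~\ref{Estarthm2algor}.

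The genuine gap is in the number-field step. You propose to ``invoke the known deterministic polynomial-time algorithms for computing the unit group of $\O_K$'' and to proceed ``via $S$-unit computations once the finitely many primes dividing the numerators and denominators of the norms of the elements of $S$ are factored.'' None of this is known to run in deterministic polynomial time: no such algorithms are available for integer factoring, for the unit group of $\O_K$, or for $T$-unit groups, and the reference~\cite{RoU} you fall back on concerns only roots of unity, not general multiplicative relations. The black box the paper actually invokes (Proposition~\ref{W2}) is Ge's algorithm~\cite{Ge}, which computes a $\Z$-basis for the kernel of $\Z^S\to K^\ast$ for a number field $K$ in deterministic polynomial time \emph{without} factoring any integers or computing any unit group; this is precisely the nontrivial ingredient, and your sketch misidentifies it. Once Ge's result is taken as given, the remainder of your outline is correct and coincides with the paper's Algorithms~\ref{W3} and~\ref{Estarthm2algor}.
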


The case of Theorem \ref{Estarthm} in which $E$ is assumed to be a
{\em field} is already quite complicated; it was done by G.~Ge \cite{Ge}.
We prove Theorem \ref{Estarthm} in section \ref{thm14pfsect} by a reduction
to the case $E$ is a field 
as dealt with by Ge.

In \cite{BBC96} one finds a result that is in substance equivalent
to Theorem \ref{Estarthm}, with a proof that proceeds along the same lines.

\bigskip

\noindent{\bf Acknowledgment:} 
We thank the referee for helpful comments, and 
especially for pointing out that a number of results were
known earlier, in \cite{FR85,GMT89,IRS94,BBC96}.

\section{Separable $\Q$-algebras}
\label{sepalgssect}

Let $E$ be a $\Q$-algebra and $\alpha\in E$. Then the map
$\Q[X] \to E$, $f\mapsto f(\alpha)$ is a ring homomorphism,
and it is not injective because $\dim_\Q(E)< \infty$.
Hence the kernel of the map is equal to $(g) = g\cdot\Q[X]$
for a unique monic polynomial $g\in\Q[X]$, which is
called the {\em minimal polynomial} of $\alpha$ (over $\Q$);
note that the image $\Q[\alpha]$ of the map is then isomorphic
to $\Q[X]/(g)$ as a $\Q$-algebra. Let $E_\sep \subset E$
be as defined in the introduction.

\begin{lem}
\label{lemma41}
Let $E$ be a $\Q$-algebra and $\alpha\in E$.
Let $g\in\Q[X]$ be the minimal polynomial of $\alpha$.
Then the following are equivalent:
\begin{enumerate}
\item
$\alpha\in E_\sep$,
\item
$(g,g')=1$,
\item
$g$ is squarefree in $\Q[X]$.
\end{enumerate}
\end{lem}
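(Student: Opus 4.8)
The plan is to prove the three equivalences in Lemma~\ref{lemma41} by exploiting the isomorphism $\Q[\alpha]\cong\Q[X]/(g)$ and reducing every statement to a property of the single polynomial $g$, so that the only real content lies in the well-known dictionary between separability of a polynomial, coprimality with its derivative, and squarefreeness.

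First I would dispose of the equivalence of (ii) and (iii). Since $\Q[X]$ is a PID and $\fchar\Q=0$, a monic polynomial $g$ is squarefree if and only if $\gcd(g,g')=1$: writing $g=\prod_i p_i^{e_i}$ with the $p_i$ distinct monic irreducibles, one has $g'=\sum_i e_i p_i^{e_i-1}p_i'\prod_{j\ne i}p_j^{e_j}$, and because we are in characteristic zero each $e_i$ and each $p_i'\ne 0$; a prime $p_i$ divides $g'$ precisely when $e_i\ge 2$, so $(g,g')=1$ exactly when every $e_i=1$. This is the standard argument and I would state it tersely, perhaps just citing that $g/\gcd(g,g')$ is the squarefree radical of $g$.

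Next, (i)$\Rightarrow$(ii): if $\alpha\in E_\sep$ there is a separable $f\in\Q[X]$, i.e.\ $(f,f')=1$, with $f(\alpha)=0$. By definition of the minimal polynomial, $g\mid f$, say $f=gh$. Then $f'=g'h+gh'$, so any common factor of $g$ and $g'$ divides both $f$ and $f'$; since $(f,f')=1$ this forces $(g,g')=1$. For the converse (ii)$\Rightarrow$(i), if $(g,g')=1$ then $g$ itself is a separable polynomial with $g(\alpha)=0$, so $\alpha\in E_\sep$ directly. Thus (ii) is equivalent to (i) almost by unwinding definitions, and the cycle (i)$\Leftrightarrow$(ii)$\Leftrightarrow$(iii) is complete.

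I do not expect a genuine obstacle here: the lemma is essentially a repackaging of the characterization of squarefree polynomials in characteristic zero together with the defining property of the minimal polynomial. The only point requiring a little care is the direction (i)$\Rightarrow$(ii), where one must pass from an arbitrary separable annihilating polynomial $f$ to the minimal polynomial $g$; the key observation is simply that $g\mid f$ and that the property ``$\gcd$ with derivative is $1$'' is inherited by divisors, as shown by the identity $f'=g'h+gh'$. Everything else is formal.
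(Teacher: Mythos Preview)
Your proposal is correct and follows essentially the same approach as the paper's proof: the paper also argues (ii)$\Rightarrow$(i) directly from the definition, (i)$\Rightarrow$(ii) by noting that $g\mid f$ forces $f,f'\in(g,g')$ and hence $(g,g')=1$, and dismisses (ii)$\Leftrightarrow$(iii) as an immediate consequence of $\fchar\Q=0$. The only difference is cosmetic---you spell out the squarefree/derivative dictionary and the identity $f'=g'h+gh'$ in slightly more detail than the paper does.
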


\begin{proof}
That (ii) implies (i) follows from the definition of $E_\sep$,
since $g(\alpha)=0$.
For (i) $\Rightarrow$ (ii), suppose $\alpha\in E_\sep$.
Let $f\in\Q[X]$ be such that  $(f,f')=1$ and $f(\alpha)=0$.
Then $g$ divides $f$, so $f,f'\in (g,g')$ and therefore $(g,g')=1$.
That (ii) and (iii) are equivalent is a direct consequence of the fact
that $\Q$ has characteristic zero.
\end{proof}

\begin{lem}
\label{lemma42}
If $E$ is a $\Q$-algebra,
then $E_\sep$ is a sub-$\Q$-algebra of $E$.
\end{lem}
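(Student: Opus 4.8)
The plan is to recast membership in $E_\sep$ as a property that passes to subrings, and then to bound the subring generated by two separable elements. For $\alpha\in E$ with minimal polynomial $g$ one has $\Q[\alpha]\cong\Q[X]/(g)$, and $g$ is squarefree precisely when this quotient has no nonzero nilpotents; so by Lemma \ref{lemma41}, $\alpha\in E_\sep$ if and only if the $\Q$-algebra $\Q[\alpha]$ is reduced, equivalently (as $\Q[\alpha]$ is finite-dimensional over $\Q$, hence Artinian) isomorphic to a finite product of finite field extensions of $\Q$. From this criterion it is immediate that $0,1\in E_\sep$ (minimal polynomials $X$ and $X-1$) and that $E_\sep$ is closed under negation and under multiplication by rational scalars, since $\Q[-\alpha]=\Q[\alpha]$ and $\Q[c\alpha]\subseteq\Q[\alpha]$ for $c\in\Q$. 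Because a subring of a reduced ring is again reduced, the whole statement then comes down to showing that, for $\alpha,\beta\in E_\sep$, the subring $\Q[\alpha,\beta]\subseteq E$ — which contains $\alpha+\beta$ and $\alpha\beta$ — is reduced.

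The key step is to observe that $\Q[\alpha,\beta]$ is the image of the multiplication map $\Q[\alpha]\otimes_\Q\Q[\beta]\to E$, which is a $\Q$-algebra homomorphism because $\Q[\alpha]$ and $\Q[\beta]$ commute inside the commutative ring $E$; thus $\Q[\alpha,\beta]$ is a quotient of $\Q[\alpha]\otimes_\Q\Q[\beta]$. Writing $\Q[\alpha]\cong\prod_i K_i$ and $\Q[\beta]\cong\prod_j L_j$ with each $K_i,L_j$ a finite field extension of $\Q$, we obtain $\Q[\alpha]\otimes_\Q\Q[\beta]\cong\prod_{i,j}(K_i\otimes_\Q L_j)$, and each factor is reduced: writing $K_i=\Q[X]/(p)$ with $p$ irreducible we have $K_i\otimes_\Q L_j\cong L_j[X]/(p)$, and $p$ remains squarefree over $L_j$ because the relation $\gcd(p,p')=1$, which holds in $\Q[X]$ since $\fchar\Q=0$, persists after base change. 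So $\Q[\alpha]\otimes_\Q\Q[\beta]$ is a reduced Artinian ring, hence a finite product of fields, and any quotient of a finite product of fields $\prod_m M_m$ is again a finite product of fields — an ideal of $\prod_m M_m$ has the form $\prod_{m\in S}M_m$ for some $S$ — in particular reduced. Therefore $\Q[\alpha,\beta]$ is reduced, so $\alpha+\beta\in E_\sep$ and $\alpha\beta\in E_\sep$, which completes the proof.

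The only ingredient above that is not pure bookkeeping is the reducedness of $K\otimes_\Q L$ for finite field extensions $K,L$ of $\Q$ — the separability of number fields, which is the one place $\fchar\Q=0$ is used. The genuine point is to argue through the tensor product and exploit that finite products of fields are well behaved under both quotients and subrings, rather than attempting to handle nilpotents in $E$ directly.
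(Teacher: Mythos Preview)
Your proof is correct and follows essentially the same route as the paper's: both realize $\Q[\alpha,\beta]$ as a quotient of $\Q[\alpha]\otimes_\Q\Q[\beta]$ (the paper writes this as $\Q[\alpha][Y]/(h(Y))$ with $h$ the minimal polynomial of $\beta$), show this tensor product is a finite product of fields using $(h,h')=1$ over each factor, and conclude. The only cosmetic difference is that the paper finishes by exhibiting an explicit separable polynomial annihilating a given element (the LCM of the componentwise minimal polynomials), whereas you invoke the equivalent criterion that $\gamma\in E_\sep$ iff $\Q[\gamma]$ is reduced.
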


\begin{proof}
Let $\alpha,\beta\in E_\sep$ with minimal polynomials $g, h$, respectively.
Then one has $g=g_1g_2\cdots g_t$, where $g_1,g_2,\ldots ,g_t\in\Q[X]$
are distinct monic irreducible polynomials, by Lemma \ref{lemma41}.
By the Chinese remainder theorem one now has
$$
\Q[\alpha] \cong \Q[X]/(g) \cong \prod_{i=1}^t \Q[X]/(g_i) =\prod_{i=1}^t K_i,
$$
where each $K_i$ is a field.
The subalgebra $\Q[\alpha,\beta]\subset E$ generated by $\alpha$ and $\beta$
is the image of the map
$\Q[\alpha][Y] \to E$, $f\mapsto f(\beta)$, of which the kernel contains $h(Y)$.
One has 
$$
\Q[\alpha][Y]/(h(Y)) \cong \prod_{i=1}^t K_i[Y]/(h(Y)),
$$
where each $K_i[Y]/(h(Y))$ is a finite product of fields because
$(h,h')=1$.
Hence $\Q[\alpha][Y]/(h(Y))$ is a product of finitely many fields $L_j$.
Each $z\in \Q[\alpha][Y]/(h(Y))$, with components $z_j\in L_j$, is a zero
of a separable polynomial in $\Q[X]$, namely the least common multiple
of the 
minimal polynomials over $\Q$ of the elements $z_j$.
Since there is a surjective $\Q$-algebra homomorphism
$\Q[\alpha][Y]/(h(Y)) \to \Q[\alpha,\beta]$, it follows that each element of
$\Q[\alpha,\beta]$ is separable over $\Q$.
In particular, one has $\alpha\pm \beta, \alpha\beta\in E_\sep$.
Since one also has $\Q\cdot 1\subset E_\sep$ (since $1$ is separable),
it follows that $E_\sep$ is a sub-$\Q$-algebra of $E$.
\end{proof}

\begin{lem}
\label{Henselthma}
Suppose $E$ is a $\Q$-algebra, $\alpha\in E$,
and $f\in\Q[X]$ is a separable polynomial for which $f(\alpha)$
is nilpotent.
Then there exists $y\in \alpha + \sqrt{0}$ such that $f(y)=0$.
\end{lem}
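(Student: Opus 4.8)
The plan is to solve $f(y)=0$ by Newton's method with starting point $y_0=\alpha$, exploiting that the ideal generated by the nilpotent element $f(\alpha)$ is itself nilpotent, so that the iteration terminates after finitely many steps.

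First I would collect two easy preliminaries. Let $J=f(\alpha)E\subseteq E$ be the principal ideal generated by $f(\alpha)$; if $f(\alpha)^N=0$ then $J^N=0$, and clearly $J\subseteq\sqrt{0}$. Next, since $f$ is separable there are $a,b\in\Q[X]$ with $af+bf'=1$; evaluating at $\alpha$ gives $b(\alpha)f'(\alpha)=1-a(\alpha)f(\alpha)$, whose right-hand side is a unit of $E$ because $a(\alpha)f(\alpha)$ is nilpotent. Hence $f'(\alpha)\in E^\ast$. More generally, whenever $y\equiv\alpha\bmod J$ one has $f'(y)-f'(\alpha)\in(y-\alpha)E\subseteq J\subseteq\sqrt{0}$, and a unit plus a nilpotent is a unit, so $f'(y)\in E^\ast$ as well.

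Then I would define $y_{i+1}=y_i-f(y_i)f'(y_i)^{-1}$ and prove by induction the two invariants $y_i-\alpha\in J$ and $f(y_i)\in J^{2^i}$. The first invariant makes $f'(y_i)$ a unit, so $y_{i+1}$ is defined, and $y_{i+1}-y_i=-f(y_i)f'(y_i)^{-1}\in J^{2^i}\subseteq J$, which propagates the first invariant. For the second I would use the finite Taylor expansion valid in any commutative ring: for each $f\in\Q[X]$ there is $q_f\in\Q[U,V]$ with
\[
f(U+V)=f(U)+f'(U)V+V^2 q_f(U,V)\quad\text{in }\Q[U,V],
\]
which is checked on the monomials $X^n$ by the binomial theorem and extended $\Q$-linearly. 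Specializing at $U=y_i$, $V=y_{i+1}-y_i$, the first two terms cancel by the definition of $y_{i+1}$, so $f(y_{i+1})=(y_{i+1}-y_i)^2 q_f(y_i,y_{i+1}-y_i)\in(J^{2^i})^2=J^{2^{i+1}}$.

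Finally, taking $i$ with $2^i\ge N$ gives $f(y_i)\in J^N=0$, so $y:=y_i$ satisfies $f(y)=0$ and $y-\alpha\in J\subseteq\sqrt{0}$, as required. The only step that needs any care is the Taylor identity displayed above; everything else is bookkeeping, and the sole appeal to finite-dimensionality is the nilpotence of $J$, without which the iteration need not stabilize.
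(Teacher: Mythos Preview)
Your proof is correct and is essentially the same Newton--Hensel iteration as the paper's, tracking $f(y_i)\in J^{2^i}$ where the paper writes $f(\alpha_i)\in E\cdot f(\alpha)^{2^i}$. One small quibble with your closing commentary: the nilpotence of $J$ follows directly from the hypothesis that $f(\alpha)$ is nilpotent, not from finite-dimensionality, so the argument in fact makes no appeal to $\dim_\Q E<\infty$ at all.
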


\begin{proof}
Since $f\in\Q[X]$ is separable, one has 
$\Q[X]\cdot f^m + \Q[X]\cdot f' = \Q[X]$ for all $m\in\Z_{>0}$.
Let now $z\in E$ be such that $f(z)$ is nilpotent.
Then it follows that $f'(z)\in E^\ast$, so we can define
$$
z^\ast = z - f(z)/f'(z)\in E.
$$
Note that $z^\ast \in z+\sqrt{0}$ because $f(z) \in \sqrt{0}$.
One has 
$$
f(X+Y) \in f(X) + f'(X)Y + \Q[X,Y]\cdot Y^2,
$$ 
and
substituting $X=z$, $Y= - f(z)/f'(z)$ one finds
$f(z^\ast) \in E\cdot f(z)^2$.
Hence $f(z^\ast)$ is nilpotent as well, so that one also has
$f'(z^\ast)\in E^\ast$. 
It follows, starting from $z=\alpha_0=\alpha$ and iterating the map
$z\mapsto z^\ast$, that there is a well-defined sequence
$\alpha_0,\alpha_1,\alpha_2,\ldots$ defined by
$\alpha_{i+1} = \alpha_i^\ast$.
By the above results, all $\alpha_i$ belong to $\alpha+\sqrt{0}$,
and $f(\alpha_i) \in E\cdot f(\alpha)^{2^i}$.
Thus, for $i$ large enough one has $f(\alpha_i)=0$, and one can
take $y=\alpha_i$.
\end{proof}

\medskip

\noindent{\bf{Proof of Theorem \ref{Ewellknownthm0i}.}}
Since $E$ is commutative, the sum of any two nilpotent elements of $E$ is
nilpotent, and it follows that $\sqrt{0}$ is an ideal of $E$;
in particular, it is a $\Q$-vector space.
Lemma \ref{lemma42} shows that  
$E_\sep$ is a sub-$\Q$-algebra of $E$.
We next prove that  the map
$$
E_\sep\oplus\sqrt{0} \to E, \qquad (u,v)\mapsto u+v
$$ 
is surjective.
Let $\alpha\in E$ have minimal polynomial $g$, and let
$f$ be the product of the distinct monic irreducible factors of $g$.
Then $f$ is separable and $f(\alpha)$ is nilpotent, so
Lemma \ref{Henselthma} shows that we can write $\alpha=u+v$ with
$f(u)=0$ and $v\in\sqrt{0}$.
Then $u\in E_\sep$, and surjectivity follows.
To prove injectivity, it suffices to show 
$E_\sep\cap\sqrt{0} = \{ 0\}$.
Let $\alpha\in E_\sep\cap\sqrt{0}$.
Then the minimal polynomial $g$ of $\alpha$ divides both some separable polynomial
and some polynomial of the form $X^m$ with $m\in\Z_{>0}$, so it
divides $X$, and therefore $\alpha=0$.
\qed

\section{The structure of $\Q$-algebras}
\label{thm12pfsect}

In this section we prove Theorem \ref{Ewellknownthm}.
We begin with a lemma.

\begin{lem}
\label{thm12validfor2}
If the assertions of  Theorem \ref{Ewellknownthm}
are valid for two $\Q$-algebras $E_0$ and $E_1$, then they
are valid for the product algebra $E=E_0\times E_1$.
\end{lem}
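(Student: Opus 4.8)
The plan is to unpack what each of the four assertions of Theorem \ref{Ewellknownthm} says and to check each one for $E = E_0 \times E_1$ using the compatible product structure. The starting point is the standard fact that $\Spec(E_0 \times E_1)$ is the disjoint union $\Spec(E_0) \sqcup \Spec(E_1)$: a prime ideal of a product ring contains exactly one of the two idempotents $(1,0)$, $(0,1)$, so it has the form $\m_0 \times E_1$ with $\m_0 \in \Spec(E_0)$, or $E_0 \times \m_1$ with $\m_1 \in \Spec(E_1)$. For $\m = \m_0 \times E_1$ one has canonical identifications $E_\m \cong (E_0)_{\m_0}$ and $E/\m \cong E_0/\m_0$ (localizing or quotienting kills the $E_1$-factor, since the element $(0,1)$ becomes a unit that is also a zerodivisor unless it is $1$, i.e.\ it maps to $0$ or $1$). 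Likewise $\sqrt{0_E} = \sqrt{0_{E_0}} \times \sqrt{0_{E_1}}$ because an element of a product is nilpotent iff each coordinate is, and $E_\sep = (E_0)_\sep \times (E_1)_\sep$ for the same coordinatewise reason (a separable polynomial annihilating $\alpha = (\alpha_0,\alpha_1)$ annihilates each $\alpha_i$; conversely the lcm of two separable polynomials is separable). I would collect these four compatibility facts first, each with a one-line justification.

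Granted those, each of (i)--(iv) follows by a short bookkeeping argument. For (i): given $\m = \m_0 \times E_1$, the isomorphism $E_\m \cong (E_0)_{\m_0}$ transports the validity of (i) for $E_0$ to $E$ at this $\m$; the maximal ideal is nilpotent, the residue field is $E_0/\m_0 = E/\m$, and it is finite over $\Q$ — and symmetrically for primes coming from $E_1$. For (ii): $\Spec(E)$ is finite since it is the union of two finite sets, and the natural map $E \to \prod_{\m} E_\m$ factors as $E_0 \times E_1 \to \bigl(\prod_{\m_0} (E_0)_{\m_0}\bigr) \times \bigl(\prod_{\m_1} (E_1)_{\m_1}\bigr)$, which is a product of the two isomorphisms furnished by (ii) for $E_0$ and for $E_1$, hence an isomorphism. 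Assertions (iii) and (iv) are handled the same way: the map $E \to \prod_\m E/\m$ is the product of the corresponding maps for $E_0$ and $E_1$, so surjectivity is inherited, and its kernel is $\sqrt{0_{E_0}} \times \sqrt{0_{E_1}} = \sqrt{0_E}$; for (iv) one additionally uses $E_\sep = (E_0)_\sep \times (E_1)_\sep$ so that the restriction to $E_\sep$ is the product of the two isomorphisms $(E_i)_\sep \isom \prod_{\m_i} E_i/\m_i$.

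I expect the main ``obstacle'' — really the only point requiring care rather than routine verification — to be pinning down the canonical identifications $E_\m \cong (E_0)_{\m_0}$ and $E/\m \cong E_0/\m_0$ \emph{compatibly}, i.e.\ so that they intertwine all the natural maps ($E \to E_\m$, $E_\m \to E/\m$, $E \to E/\m$) with their counterparts over $E_0$; this is what makes ``the assertions are valid'' transfer rather than just ``the objects are abstractly isomorphic.'' The clean way is to observe that for $\m = \m_0 \times E_1$ the image of $(0,1) \in E$ in $E_\m$ is $0$ (it lies in $\m \subseteq$ the kernel of $E \to E_\m$, once one checks $(0,1) \notin \m$ is false — indeed $(0,1) \in \m$), so multiplication by $(1,0)$ is the identity on $E_\m$ and the projection $E \to E_0$ induces the iso $E_\m \isom (E_0)_{\m_0}$; and similarly on residue fields. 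After that, everything is diagram-chasing through finite products, with no computation. One small bit of care: the claim ``a prime of $E$ contains exactly one of $(1,0),(0,1)$'' uses that $(1,0)(0,1)=0 \in \m$ forces one of the factors into $\m$, and they cannot both be in $\m$ since they sum to $1$; this should be stated explicitly since all of Theorem \ref{Ewellknownthm} for products rests on it.
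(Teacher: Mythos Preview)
Your proposal is correct and follows essentially the same outline as the paper's proof, which simply lists the same compatibility facts ($\Spec(E)$ as a disjoint union, $E_\m \cong (E_0)_{\m_0}$, $E/\m \cong E_0/\m_0$, and $E_\sep$, $\sqrt{0}$ as products) and declares the rest routine. One small correction to the point you flagged as delicate: it is \emph{not} true that $\m \subseteq \ker(E \to E_\m)$ in general (think of $E$ a domain), so that is not why $(0,1)$ dies in $E_\m$; the correct reason is that $(1,0) \notin \m$ and $(1,0)\cdot(0,1) = 0$, so $(0,1)$ is annihilated by an element of the multiplicative set $E \setminus \m$ and hence maps to $0$ in the localization.
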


\begin{proof}
Since this proof is straightforward, we only indicate the main points.
The two projection maps $E\to E_i$ ($i=0,1$) induce maps
$\Spec(E_i)\to \Spec(E)$, and these allow us to identify
$\Spec(E)$ with the disjoint union of $\Spec(E_0)$ and $\Spec(E_1)$.
If $\m=\m_0 \times E_1 \in \Spec(E)$ comes from 
$\m_0 \in \Spec(E_0)$, then one has
$E/\m \isom E_0/\m_0$ and $E_\m \isom (E_0)_{\m_0}$.
Also, one has $E_\sep = (E_0)_{\sep} \times (E_1)_{\sep}$,
and likewise for the nilradicals.
These facts readily imply the lemma.
\end{proof}

{\bf{Proof of Theorem \ref{Ewellknownthm}.}}
We use induction on $\dim_\Q(E)$.
If $\dim_\Q(E)=0$ then $\Spec(E)=\emptyset$ and all assertions are clear.
Next suppose $\dim_\Q(E)> 0$.

Suppose first that each $\alpha\in E_\sep$ has a minimal polynomial
that is irreducible; then each $\Q[\alpha]$ is a field, so each
non-zero $\alpha\in E_\sep$ has an inverse, so $E_\sep$ is a field.
By $E_\sep\oplus\sqrt{0}=E$ it follows that 
$E/\sqrt{0} \cong E_\sep$, so $\sqrt{0}$ is a maximal ideal.
Each $\p\in\Spec(E)$ contains $\sqrt{0}$, but $\sqrt{0}$ is maximal,
so $\p=\sqrt{0}$.
This proves that $\Spec(E)=\{ \sqrt{0}\}$. 
Hence $E$ is local with maximal ideal $\sqrt{0}$, and
$\sqrt{0}$ is nilpotent because it is a finitely generated
ideal consisting of nilpotents. All statements of
Theorem \ref{Ewellknownthm} follow.

Next assume $\alpha\in E_\sep$ is such that its minimal polynomial $g$ is
reducible: $g=g_0\cdot g_1$ with $g_i\in\Q[X] \smallsetminus \Q$.
Lemma \ref{lemma41} implies that $g_0$ and $g_1$ are coprime, so we have
$$
\Q[\alpha] \cong \Q[X]/(g) \cong \Q[X]/(g_0) \times \Q[X]/(g_1)
$$
as $\Q$-algebras, where both rings $\Q[X]/(g_i)$ are non-zero.
Let $e\in \Q[\alpha]\subset E$ map to
$(1,0)\in \Q[X]/(g_0) \times \Q[X]/(g_1)$.
Then one has $e^2=e$ and $e\notin\{ 0,1\}$.
The ideals $I=Ee$ and $J=E(1-e)$ satisfy $I+J=E$
(because $1\in I+J$) and therefore
$I\cap J = IJ = Ee(1-e) = \{ 0\}$.
The Chinese remainder theorem now shows 
$E = E/(I\cap J) \cong E/I\times E/J = E_0\times E_1$ (say) as
$\Q$-algebras, where $\dim_\Q(E_i) < \dim_\Q(E)$
because $I$ and $J$ are non-zero. The induction hypothesis shows that
Theorem \ref{Ewellknownthm}
is valid for each $E_i$, so by Lemma \ref{thm12validfor2}
it is also valid for $E$.
\qed

\section{Derivations and idempotents}
\label{thm1678pfsect}

In this section we prove Theorems \ref{ggprthmintro},  \ref{exintro}, 
and \ref{exthmintro}.

\medskip

\noindent{\bf{Proof of Theorem \ref{ggprthmintro}.}}
The map $\ddd$ is well-defined since if $g$ divides $h_1-h_2$
then $h_1'-h_2' \in (g,g')$.
It is $\Q$-linear, and it is surjective because the map 
$\Q[X] \to \Q[X]$, $h\to h'$ is surjective.

We prove $E_\sep \subset \ker(\ddd)$.
Let $\alpha\in E_\sep$, and let $f\in\Q[X]$ be a separable polynomial with
$f(\alpha)=0$.
Then $f'(\alpha)\ddd(\alpha) = \ddd(f(\alpha)) = \ddd(0)=0$ and
$f(\alpha)\ddd(\alpha) = 0\cdot \ddd(\alpha) = 0$, and since $(f,f')=1$ one obtains 
$\ddd(\alpha) = 0$.

To prove $E_\sep = \ker(\ddd)$ it will now suffice to prove that
$E_\sep$ and  $\ker(\ddd)$ have the same dimension.
The prime ideals $\m$ of $E$ are in bijective correspondence
with the monic irreducible factors $h$ of $g$ in $\Q[X]$,
by $\m = (h)/(g)$, and $E/\m \cong \Q[X]/(h)$.
The isomorphism $E_\sep \isom \prod_{\m\in\Spec(E)}E/\m$
from Theorem \ref{Ewellknownthm}(iv) now implies
$\dim_\Q(E_\sep) = \sum_h \deg(h) = \deg(\hat{g})$, with
$\hat{g} = \prod_h h$ and $h$ ranging over the monic irreducible
factors of $g$ in $\Q[X]$.
If $h$ occurs exactly $m$ times in $g$, then it
occurs exactly $m-1$ times in $g'$, so 
$(g,g') = (g/\hat{g}).$
Hence 
$$\dim_\Q (\ker(\ddd)) = \deg(g) -\deg(g/\hat{g}) = \deg(\hat{g})
= \dim_\Q (E_\sep)
$$
as required. This proves Theorem \ref{ggprthmintro}.
\qed

\medskip

\noindent{\bf{Proof of Theorem \ref{exintro}.}}
Theorem \ref{exintro} is easy to check if $n=0$, in which case
$f=0$, and also if $m=0$, $n\neq 0$, in which case
$f=1$.
Assume now $m>0$, $n>0$.
We first prove existence.
Define
$$
\g= \sum_{i=0}^{m-1}{{m+\n-1}\choose{i}}X^i(1-X)^{m+\n-1-i},
$$
$$
f= \sum_{i=m}^{m+\n-1}{{m+\n-1}\choose{i}}X^i(1-X)^{m+\n-1-i}.
$$
By the binomial theorem we have
$$
\g+f = (X+(1-X))^{m+\n-1} = 1.
$$
Since one has also $f\in R[X]\cdot X^m$ and $1-f=\g\in R[X]\cdot (1-X)^n$,
this proves existence, and it also proves the first formula for $f$.

To prove uniqueness, suppose that $h\in R[X]$ also satisfies
$$
\deg(h) < m+\n, \quad h \in R[X]\cdot X^m, \quad h \in 1+ R[X]\cdot (1-X)^\n.
$$
Then we have
$
h-f \in R[X]\cdot X^m$  
and $h-f \in R[X]\cdot (1-X)^\n,
$
so
\begin{multline*}
h-f = (h-f)\cdot 1 = (h-f)\g + (h-f)f \\ \in 
R[X]\cdot X^m\cdot R[X]\cdot (1-X)^\n + R[X]\cdot (1-X)^\n\cdot R[X]\cdot X^m \\
= R[X]\cdot X^m\cdot (1-X)^\n,
\end{multline*}
the last equality because $X^m$ and $(1-X)^n$ are central in $R[X]$.
Since $\deg(h-f) < m+n$, it follows that $h=f$. This proves uniqueness.

It remains to prove the second expression for $f$.
First assume that $R=\Q$.
The derivative $f'$ of $f$ is divisible both by $X^{m-1}$ and by
$(1-X)^{n-1}$, so if we write 
$f=\sum_{i=m}^{m+\n-1}c_iX^i$ with $c_i\in\Q$, then one has
$$
\sum_{i=m}^{m+\n-1}ic_iX^{i-1}=qX^{m-1}(1-X)^{n-1},
$$
where $q\in\Q[X]$ is a certain polynomial.
Comparing degrees one finds $q\in\Q$.
The first formula for $f$ shows $c_m={{m+\n-1}\choose{m}}$,
so comparing the terms of degree $m-1$ one finds
$q=mc_m = m{{m+\n-1}\choose{m}}$.
Comparing the terms of degree $i-1$ now yields
$$ic_i = q(-1)^{i-m}{{\n-1}\choose{i-m}} = 
(-1)^{i-m}m{{m+\n-1}\choose{m}}{{n-1}\choose{i-m}}$$
from which it follows that
$$
c_i = (-1)^{i-m}{{m+\n-1}\choose{i}}{{i-1}\choose{i-m}}.
$$
This proves the last equality of Theorem \ref{exintro} in $\Q[X]$.
It is then also valid in $\Z[X]$, and since there is a unique ring homomorphism
$\Z \to R$ it is valid in $R[X]$ as well.

The remainder of $f^2$ upon division by $X^m(1-X)^n$ has the same
properties as $f$, so by uniqueness is equal to $f$.
Hence  $f^2\equiv f\bmod R[X]\cdot X^m(1-X)^n$.
\qed

\medskip

\noindent{\bf{Proof of Theorem \ref{exthmintro}.}}
We first prove that $y=f(\alpha)$ has the properties stated.
If $n=0$ then $y=f(\alpha)=0$, which does satisfy $y^2=y$, and $y-\alpha=-\alpha$
is nilpotent because $\alpha^m=0$.
If $m=0$, $n\neq 0$ then $y=f(\alpha)=1$, which does satisfy $y^2=y$, and $y-\alpha=1-\alpha$
is nilpotent because $(1-\alpha)^n=0$.
For $m\neq 0$, $n\neq 0$, one has 
$y-\alpha=f(\alpha)-\alpha \in R\alpha$ and
$y-\alpha=f(\alpha)-1 + (1-\alpha) \in R(1-\alpha)$,
so
$$
y-\alpha=(y-\alpha)((1-\alpha)+\alpha) \in R\alpha(1-\alpha) + R(1-\alpha)\alpha = R\alpha(1-\alpha).
$$
From $\alpha^m(1-\alpha)^n=0$ and the commutativity of $R$ it now follows
that $y-\alpha$ is nilpotent.
With $\g$ as in the proof of Theorem \ref{exintro}, we have
$$
f(1-f) = f\g \in R[X]\cdot X^m\cdot (1-X)^\n,
$$
so $y-y^2 = f(\alpha)(1-f(\alpha)) \in R\cdot \alpha^m\cdot (1-\alpha)^\n = \{ 0\}$.

To prove uniqueness, suppose that $z\in R$ is such that
$z-\alpha$ is nilpotent and $z^2=z$.
Since $R$ is commutative, we have
$$
(z-y)^3 = z^3 -3z^2y + 3zy^2 -y^3 = z-3zy+3zy-y = z-y
$$
and by induction it follows that $z-y = (z-y)^{3^t}$
for all $t\in\Z_{\ge 0}$.
But since $R$ is commutative, the element
$z-y = (z-\alpha)-(y-\alpha)$ is nilpotent, and choosing $t$
such that $(z-y)^{3^t}=0$ one finds $z=y$.
\qed

\section{Splitting off the nilradical}
\label{splitnilradsect}

In this section we prove Theorem \ref{Ewellknownthm0ii}.
We begin with an algorithm that also determines minimal polynomials.

\begin{algorithm}
\label{xyzalgor}
Given a $\Q$-algebra $E$ and $\alpha\in E$, this algorithm computes
the minimal polynomial of $\alpha$, as well as the unique pair
$(u,v)\in E_\sep \oplus \sqrt{0}$ with $\alpha=u+v$.
\begin{enumerate}
\item
Compute $1,\alpha,\alpha^2,\ldots$ until the smallest $k\in\Z_{\ge 0}$ is found
for which $\alpha^k\in\sum_{i<k}\Q \alpha^i$;
if $\alpha^k = \sum_{i<k}c_i\alpha^i$ with $c_i\in\Q$, output
$g = Y^k - \sum_{i<k}c_iY^i \in \Q[Y]$ as the minimal polynomial of $\alpha$.
\item
Use the Euclidean algorithm to compute $(g,g')$, and compute
$\hat{g} = g/(g,g')$.
\item
Use linear algebra to compute the unique $q\in\Q[Y]$ satisfying
$$
\deg(q) < \deg((g,g')), \qquad q'\hat{g} + q\hat{g}' \equiv 1\bmod (g,g'),
$$
and output $v=q(\alpha)\hat{g}(\alpha)$, $u=\alpha-v$.
\end{enumerate}
\end{algorithm}

\begin{prop}
Algorithm \ref{xyzalgor} is correct and runs in polynomial time.
\end{prop}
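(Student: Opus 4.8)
The plan is to verify the three outputs of Algorithm \ref{xyzalgor} in turn and then to note that every operation it performs is a piece of exact linear algebra or univariate polynomial arithmetic over $\Q$, so that it runs in polynomial time. For step (i): the set of $p\in\Q[Y]$ with $p(\alpha)=0$ is the kernel of the ring homomorphism $p\mapsto p(\alpha)$, hence equals $(g)$ for a unique monic $g$, and $\deg(g)$ is exactly the least $k$ with $\alpha^{k}\in\sum_{i<k}\Q\alpha^{i}$. Since $1,\alpha,\dots,\alpha^{k-1}$ are then $\Q$-linearly independent, $k\le n=\dim_{\Q}(E)$, so the loop stops after at most $n$ iterations, and $g=Y^{k}-\sum_{i<k}c_{i}Y^{i}$ is indeed the minimal polynomial of $\alpha$.

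For steps (ii)--(iii): the evaluation map gives an isomorphism $\Q[Y]/(g)\isom\Q[\alpha]\subseteq E$ carrying $Y+(g)$ to $\alpha$, and I would apply Theorem \ref{ggprthmintro} to $\Q[Y]/(g)$ to obtain the surjective $\Q$-linear map $\delta\colon\Q[Y]/(g)\to\Q[Y]/(g,g')$, $h+(g)\mapsto h'+(g,g')$, with $\ker(\delta)=(\Q[\alpha])_{\sep}$. Since $\Q$ has characteristic $0$, $\hat g=g/(g,g')$ is the product of the distinct monic irreducible factors of $g$, so a power of $\hat g$ is divisible by $g$; hence $\hat g(\alpha)$ is nilpotent, and so is $v=q(\alpha)\hat g(\alpha)$, i.e.\ $v\in\sqrt{0}$. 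Moreover $v=(q\hat g)(\alpha)$ corresponds to $q\hat g+(g)$, so $\delta$ sends it to $q'\hat g+q\hat g'+(g,g')$, which by the defining property of $q$ equals $1+(g,g')=\delta(Y+(g))$; thus $\delta(u)=0$ for $u=\alpha-v$, so $u$ is separable over $\Q$ and $u\in E_{\sep}$. Therefore $(u,v)\in E_{\sep}\oplus\sqrt{0}$ with $\alpha=u+v$, and by the uniqueness in Theorem \ref{Ewellknownthm0i} this is the pair the algorithm must return.

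It remains to see that the $q$ sought in step (iii) exists and is unique. Put $d=\deg((g,g'))$ and consider the $\Q$-linear map from the $d$-dimensional space of polynomials in $\Q[Y]$ of degree $<d$ to the $d$-dimensional space $\Q[Y]/(g,g')$ sending $q$ to $(q\hat g)'+(g,g')=q'\hat g+q\hat g'+(g,g')$. If $q$ lies in its kernel, then $(q\hat g)(\alpha)\in\ker(\delta)=(\Q[\alpha])_{\sep}$ while also being nilpotent, hence $(q\hat g)(\alpha)=0$ by Theorem \ref{Ewellknownthm0i}; so $g\mid q\hat g$ in the integral domain $\Q[Y]$, whence $(g,g')\mid q$, and the bound $\deg(q)<d$ forces $q=0$. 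The map is therefore an isomorphism, so the linear system in step (iii) has a unique solution, which Gaussian elimination over $\Q$ produces.

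For the running-time claim: computing the powers $\alpha^{i}$ uses only the structure constants (matrix--vector products), the linear dependence in step (i) and the linear system in step (iii) are handled by Gaussian elimination over $\Q$, step (ii) is the Euclidean algorithm together with one polynomial division in $\Q[Y]$, and forming $v=q(\alpha)\hat g(\alpha)$ and $u=\alpha-v$ costs a bounded number of additions and scalar multiplications. Each of these is polynomial-time once one checks, via the usual Hadamard/determinant estimates for exact linear algebra over $\Q$ and the standard bounds for the Euclidean algorithm on rational polynomials, that the numerators and denominators arising stay polynomially bounded in $n$ and the input size. I expect that bit-length bookkeeping to be the only genuine obstacle; the correctness part is essentially a repackaging of Theorem \ref{ggprthmintro} together with the direct-sum decomposition of Theorem \ref{Ewellknownthm0i}.
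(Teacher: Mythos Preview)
Your proof is correct and follows essentially the same approach as the paper: both verify step~(i) via $k\le\dim_\Q(E)$, identify $\hat g$ as the squarefree part of $g$, and use Theorem~\ref{ggprthmintro} together with Theorem~\ref{Ewellknownthm0i} to justify step~(iii). The only minor difference is in how the unique existence of $q$ is argued: the paper first invokes Theorem~\ref{Ewellknownthm0i} on $\Q[\alpha]$ to get a unique $v\in\sqrt{0_{\Q[\alpha]}}=\Q[\alpha]\hat g(\alpha)$ and then translates this into a condition on $q$, whereas you show directly that the linear map $q\mapsto (q\hat g)'+(g,g')$ is injective (using $E_\sep\cap\sqrt{0}=\{0\}$) and hence an isomorphism by a dimension count---but this is the same idea packaged slightly differently.
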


\begin{proof}
Step (i) is clearly correct, and it runs in polynomial time
because $k=\dim_\Q(\Q[\alpha]) \le \dim_\Q(E)$.
Step (ii) runs in polynomial time by Chapter 6 of \cite{Gathen}.
Note that $\hat{g}$ is, as in the proof of Theorem~\ref{ggprthmintro},
the product of the distinct monic irreducible factors of $g$
in $\Q[Y]$.
We prove the correctness of step (iii).
Applying Theorem \ref{Ewellknownthm0i} to the
$\Q$-algebra $\Q[\alpha] \cong \Q[Y]/(g)$, we see that there
exists a unique element $v\in \sqrt{0_{\Q[\alpha]}}$ with
$u=\alpha-v\in \Q[\alpha]_\sep$.
Since we have $\sqrt{0_{\Q[\alpha]}} = \Q[\alpha]\hat{g}(\alpha)$, 
and since by Theorem \ref{ggprthmintro} we have
$$\Q[\alpha]_\sep = \{ h(\alpha) : h\in\Q[Y], h'\in (g,g')\},$$
it is equivalent to say that there is a unique element
$v= q(\alpha)\hat{g}(\alpha)$ with 
$$
1-q'\hat{g} - q\hat{g}' \equiv 0\bmod (g,g');
$$
here $q\in\Q[Y]$ is uniquely determined modulo $g/\hat{g} = (g,g')$.
This implies the unique existence of $q$ as in step (iii)
and the correctness of the output.
\end{proof}

\begin{ex}
\label{gfex1}
Let $E=\Q[X]/(X^2+1)^2$.
For all $h\in \Q[X]$, let $\tilde{h}$ denote the image of $h$ in $E$.
We apply Algorithm \ref{xyzalgor} with  $\alpha=\tilde{X}$. 
The minimal polynomial of $\tilde{X}$
is $g=(Y^2+1)^2$,
and we have $\hat{g}=Y^2+1$. 
By Theorem~\ref{ggprthmintro},
$$
E_\sep = \ker(\ddd) = 
\{ \tilde{h} : \deg(h)\le 3 \text{ and } h'\in\Q\cdot(X^2+1)\}
= \Q + \Q\cdot(\tilde{X}^3+3\tilde{X}).
$$
Write $q=a+bY$, substitute into 
$$
q'\hat{g}+q\hat{g}'\equiv 1\bmod(Y^2+1),
$$ 
and solve for
$a,b\in\Q$ to obtain
$q=-\frac{1}{2}Y$ and 
$$
v = q(\tilde{X})\hat{g}(\tilde{X}) = -\frac{1}{2}\tilde{X}(\tilde{X}^2+1) \in \sqrt{0_E}.
$$
Then $u=\tilde{X}-v = 
\frac{1}{2}(\tilde{X}^3 + 3\tilde{X}) \in E_\sep$.
\end{ex}

\begin{ex}
\label{gfex2}
Let $E=\Q[X]/(X^2+1)^3$ and 
let $\tilde{h}$ denote the image in $E$ of $h\in \Q[X]$.
Applying Algorithm \ref{xyzalgor} with  $\alpha=\tilde{X}$, 
we have $g=(Y^2+1)^3$  and $\hat{g}=Y^2+1$.
Then
$$
E_\sep = \ker(\ddd) = \Q\cdot 1+\Q\cdot(3\tilde{X}^5 + 10\tilde{X}^3 + 15\tilde{X}).
$$
Solving for the 4 coefficients of $q$
in 
$$
q'\hat{g}+q\hat{g}'\equiv 1\bmod(Y^4 + 2Y^2+1)
$$ 
and letting $u=\tilde{X}-q(\tilde{X})(\tilde{X}^2+1)$, 
we compute that $q=-\frac{1}{8}(3Y^3+7Y)$ and 
$$
u=\tilde{X}+\frac{1}{8}(3\tilde{X}^2+7)\tilde{X}(\tilde{X}^2+1) =
\frac{1}{8}(3\tilde{X}^5 + 10\tilde{X}^3 + 15\tilde{X})
\in E_\sep.
$$ 
\end{ex}

\begin{algorithm}
\label{splittingalgor}
Given a $\Q$-algebra $E$, this algorithm computes a $\Q$-basis for $E_\sep$
and a $\Q$-basis for $\sqrt{0}$, as well as
the matrices describing the map 
$E_\sep \oplus \sqrt{0} \isom E$, $(u,v) \mapsto u+v$ 
and its inverse.
\begin{enumerate}
\item
Applying Algorithm \ref{xyzalgor} to each of the basis elements
$e_1,e_2,\ldots,e_n$ of $E$, determine elements
$u_1,u_2,\ldots,u_n\in E_\sep$ and
$v_1,v_2,\ldots,v_n\in\sqrt{0}$ such that $e_i=u_i+v_i$ for
$1 \le i \le n$.
\item
Using linear algebra, determine a maximal subset
$I \subset \{ 1,2,\ldots,n\}$ for which $(u_i)_{i\in I}$
is linearly independent over $\Q$, and express each
$u_j$ ($1\le j \le n$) as a $\Q$-linear combination of
$(u_i)_{i\in I}$.
Output $(u_i)_{i\in I}$ as a $\Q$-basis for $E_\sep$.
\item
Using linear algebra, determine a maximal subset
$J \subset \{ 1,2,\ldots,n\}$ for which $(v_i)_{i\in J}$
is linearly independent over $\Q$, and express each
$v_j$ ($1\le j \le n$) as a $\Q$-linear combination of
$(v_i)_{i\in J}$.
Output $(v_i)_{i\in J}$ as a $\Q$-basis for $\sqrt{0}$.
\item
The matrix describing the map $E_\sep \oplus \sqrt{0} \to E$
consists of the coordinates of the vectors $u_i$ ($i\in I$) and
$v_i$ ($i\in J$) when expressed on the basis
$e_1,e_2,\ldots,e_n$, as computed in step (i).
The matrix describing the inverse map $E \to E_\sep \oplus \sqrt{0}$
consists of the coordinates of $u_1,u_2,\ldots,u_n$ on $(u_i)_{i\in I}$ 
as computed in step (ii) and
of the coordinates of $v_1,v_2,\ldots,v_n$ on $(v_i)_{i\in J}$ 
as computed in step (iii).
\end{enumerate}
\end{algorithm}

\medskip

\noindent{\bf{Proof of Theorem \ref{Ewellknownthm0ii}.}}
It is a routine exercise to show that Algorithm \ref{splittingalgor}
has the properties claimed in the statement of Theorem \ref{Ewellknownthm0ii}.
\qed

\begin{rem}
If $0\neq g\in\Q[X]$, and one uses the bases
$\{ X^i\}_{i=0}^{\deg(g)-1}$ for $\Q[X]/(g)$ and 
$\{ X^i\}_{i=0}^{\deg((g,g'))-1}$ for
$\Q[X]/(g,g')$, then one will find that $E_\sep$ has a basis consisting of $1$ and one polynomial of degree $j$ for each $j\in\Z$ with
$\deg((g,g'))< j < \deg(g)$.
\end{rem}

\section{Primitive elements}
\label{primeltsnewsect}

In this section we prove Theorems \ref{primeltthminto}
and \ref{equivthminto}(i).

Suppose $E$ is a $\Q$-algebra.
If $\alpha\in E$, then $\alpha$ is {\em integral} over $\Z$ if
there is exists a monic polynomial $f\in\Z[X]$ such that 
$f(\alpha)=0$.
If $f\in\Z[X]$ is monic and separable, then
$f=\prod_i(X-a_i)$ with $a_i\in\overline{\Z}\subset \C$,
where $\overline{\Z}$ denotes the set of algebraic integers in $\C$, and
we define 
the {\em discriminant} of $f$ to be
$$
\Delta(f)=\prod_{i<j}(a_i-a_j)^2\in\Z \smallsetminus \{ 0\}.
$$

\begin{prop}
\label{primeltprop}
Suppose $E$ is a  
$\Q$-algebra,
suppose $\alpha,\beta\in E$ are separable  over $\Q$
and integral over $\Z$. Let $f\in\Z[X]$ denote the
minimal polynomial of $\alpha$ 
and
let $d\in\Z_{>0}$ be such that $d^2 \nmid\Delta(f)$. 
Then 
$\Q[\alpha,\beta]=\Q[\alpha+d\beta]$.
\end{prop}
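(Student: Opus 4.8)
The plan is to reduce to the classical primitive element theorem for separable field extensions, via the structural results already established. Since $\alpha$ and $\beta$ are separable over $\Q$, Lemma \ref{lemma42} shows that the subalgebra $A = \Q[\alpha,\beta] \subset E$ consists entirely of separable elements, so $A = A_\sep$; hence by Theorem \ref{Ewellknownthm}(iv) (applied with $E$ replaced by $A$) the natural map $A \isom \prod_{\m \in \Spec(A)} A/\m$ is a $\Q$-algebra isomorphism onto a finite product of finite field extensions $K_1, \ldots, K_s$ of $\Q$. Write $\alpha_i, \beta_i \in K_i$ for the components of $\alpha, \beta$. To prove $\Q[\alpha+d\beta] = A$ it suffices, since $\Q[\alpha+d\beta] \subseteq A$ and both are finite-dimensional, to show that the minimal polynomial of $\gamma := \alpha + d\beta$ over $\Q$ has degree at least $\dim_\Q A = \sum_i [K_i:\Q]$. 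Equivalently, it suffices to show that the evaluation map $\Q[X] \to A$, $X \mapsto \gamma$, is surjective; and since $A$ is a product of fields, this is the same as showing that the $s$ ring homomorphisms $\Q[\gamma] \to K_i$ are pairwise distinct and each is surjective.

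For surjectivity of $\Q[\gamma] \to K_i$: the image is $\Q[\gamma_i] = \Q[\alpha_i + d\beta_i] \subseteq K_i$, and $K_i = \Q[\alpha_i, \beta_i]$ since $A$ surjects onto $K_i$ and is generated by $\alpha, \beta$. So I want $\Q[\alpha_i + d\beta_i] = \Q[\alpha_i, \beta_i]$ inside the separable field extension $K_i/\Q$. This is exactly the setting where one uses that a "generic" linear combination is primitive: the extension $\Q(\alpha_i, \beta_i)/\Q$ is separable, so the classical argument shows $\Q(\alpha_i + c\beta_i) = \Q(\alpha_i, \beta_i)$ for all but finitely many $c \in \Q$ — specifically, for all $c$ except those of the form $(\sigma\alpha_i - \alpha_i)/(\beta_i - \tau\beta_i)$ where $\sigma, \tau$ range over embeddings into $\bar\Q$ with $\tau\beta_i \neq \beta_i$. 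The crux of the proposition is that the hypothesis "$d^2 \nmid \Delta(f)$" rules out $c = d$ being a bad value, and moreover handles all components $K_i$ simultaneously. The key point is that the embeddings $K_i \to \bar\Q$ send $\alpha_i$ to roots of $f$ (since $f(\alpha)=0$ and $f$ is the minimal polynomial of $\alpha$ in the full algebra $E$, so certainly $f(\alpha_i) = 0$), and the bad values of $c$ have numerators of the form $a_j - a_k$ with $a_j, a_k$ distinct roots of $f$.

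The main obstacle — and the heart of the argument — is making the passage from "$d$ avoids every bad $c$" to the clean divisibility hypothesis on $\Delta(f)$. Here is the mechanism I expect to use. Suppose, for contradiction, that for some $i$ we have $\Q[\gamma_i] \subsetneq K_i$; then there exist two distinct $\Q$-embeddings $\psi_1, \psi_2 : K_i \hookrightarrow \bar\Q$ agreeing on $\gamma_i = \alpha_i + d\beta_i$. Agreement on $\gamma_i$ forces $\psi_1\alpha_i - \psi_2\alpha_i = -d(\psi_1\beta_i - \psi_2\beta_i)$; here $\psi_1\alpha_i \neq \psi_2\alpha_i$ (else $\psi_1\beta_i = \psi_2\beta_i$ too, contradicting $\psi_1 \neq \psi_2$ as $\alpha_i, \beta_i$ generate $K_i$), so writing $a_j = \psi_1\alpha_i$, $a_k = \psi_2\alpha_i$, these are two distinct roots of $f$ with $a_j - a_k = -d(\psi_1\beta_i - \psi_2\beta_i)$. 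Now $\psi_1\beta_i - \psi_2\beta_i$ is an algebraic integer (as $\beta$ is integral over $\Z$), so $d \mid (a_j - a_k)$ in $\overline\Z$. One would similarly like to exploit distinctness across different $i$'s or other embedding pairs to get a higher power of $d$ dividing $\Delta(f) = \prod_{j<k}(a_j - a_k)^2$. The cleanest route, which I would pursue, is: the distinct embeddings of $A$ into $\bar\Q$ biject with the roots of $\hat f := \prod(X - a_l)$ (the radical of $f$), via $\alpha \mapsto a_l$; a collision on $\gamma$ between the embeddings indexed by $a_j$ and $a_k$ gives $d \mid a_j - a_k$ in $\overline\Z$, hence $d^2 \mid (a_j-a_k)^2 \mid \Delta(\hat f)$ in $\overline\Z \cap \Q = \Z$, and since $\Delta(\hat f) \mid \Delta(f)$ (as $\hat f \mid f$, with $f/\hat f$ accounting for the repeated-root factors, and the discriminant of a product factors with an extra perfect-square term), we get $d^2 \mid \Delta(f)$, contradicting the hypothesis. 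So $\gamma$ has no collision among embeddings, meaning $\Q[\gamma] \to \prod K_i$ is "injective on spectra" and each component surjective, giving $\dim_\Q \Q[\gamma] = \dim_\Q A$, whence $\Q[\alpha+d\beta] = A = \Q[\alpha,\beta]$. I should double-check the one subtle divisibility claim — that $d^2 \mid (a_j - a_k)^2$ in $\overline\Z$ combined with $(a_j-a_k)^2 \mid \Delta(f)$ in $\overline\Z$ yields $d^2 \mid \Delta(f)$ in $\Z$, not merely in $\overline\Z$ — but this follows since $\Delta(f)/d^2 \in \Q \cap \overline\Z = \Z$.
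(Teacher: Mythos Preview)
Your approach is the paper's: work with the set $\Phi$ of $\Q$-algebra homomorphisms $A = \Q[\alpha,\beta] \to \C$, show that distinct $\varphi, \psi \in \Phi$ take distinct values on $\gamma = \alpha + d\beta$, and conclude that the minimal polynomial of $\gamma$ has degree at least $\#\Phi = \dim_\Q A$. The collision analysis is also the same: $\varphi(\alpha) - \psi(\alpha) = d(\psi(\beta)-\varphi(\beta)) \in d\overline{\Z}$, and if $\varphi(\alpha) = a_j \neq a_k = \psi(\alpha)$ are distinct roots of $f$ then $d^2 \mid (a_j - a_k)^2 \mid \Delta(f)$ in $\overline{\Z}$, hence $\Delta(f)/d^2 \in \overline{\Z}\cap\Q = \Z$.

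There is, however, one incorrect assertion in your ``cleanest route'': the embeddings of $A$ into $\bar\Q$ do \emph{not} in general biject with the roots of $f$ via $\alpha \mapsto a_l$. (Take $\alpha = 0$ and $\beta = \sqrt{2}$: then $f = X$ has one root, but $A = \Q(\sqrt 2)$ has two embeddings.) You invoke this false bijection to ensure that a collision between \emph{distinct} embeddings yields \emph{distinct} roots $a_j \neq a_k$; without it, the case $a_j = a_k$ gives only the vacuous $d \mid 0$. The correct justification---and the one the paper uses---is the argument you already gave in the single-$K_i$ discussion, which applies verbatim to any two embeddings of $A$: if $\varphi(\gamma) = \psi(\gamma)$ and $\varphi(\alpha) = \psi(\alpha)$, then also $\varphi(\beta) = \psi(\beta)$, whence $\varphi = \psi$ since $\alpha, \beta$ generate $A$. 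With that one-line fix your argument is complete and coincides with the paper's. (A side remark: since $\alpha$ is separable, its minimal polynomial $f$ is already squarefree by Lemma~\ref{lemma41}, so $\hat f = f$ and the detour through $\Delta(\hat f) \mid \Delta(f)$ is unnecessary.)
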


\begin{proof}
Let $\Phi$ denote the set of ring homomorphisms
from $\Q[\alpha,\beta]$ to $\C$.
By Theorem \ref{Ewellknownthm0i} we have
$\Q[\alpha,\beta]\subset E_\sep$, so $\Q[\alpha,\beta]=\Q[\alpha,\beta]_\sep$ is the
product of finitely many number fields (Theorem \ref{Ewellknownthm}(iv)).
It follows from this that one has
$\#\Phi = \dim_\Q(\Q[\alpha,\beta])$.

We first show that if $\varphi,\psi\in\Phi$ and 
$\varphi(\alpha+d\beta) = \psi(\alpha+d\beta)$, then 
$\varphi=\psi$.
Suppose that $\varphi,\psi\in\Phi$ and 
$\varphi(\alpha+d\beta) = \psi(\alpha+d\beta)$.
Then $\varphi(\alpha),\psi(\alpha),\varphi(\beta),\psi(\beta)
\in\overline{\Z}$, so
$$
\varphi(\alpha)-\psi(\alpha)=d(\psi(\beta)-\varphi(\beta))
\in d\overline{\Z}.
$$
Write $f=\prod_i(X-a_i)$ with $a_i\in\overline{\Z}\subset \C$.
Suppose $\varphi(\alpha)=a_i$ and $\psi(\alpha)=a_j$.
If $i\neq j$, then
$$
\Delta(f) \in \overline{\Z}(\varphi(\alpha)-\psi(\alpha))^2
\subset \overline{\Z}d^2,
$$ 
so $\Delta(f)/d^2\in \overline{\Z}\cap\Q =\Z$,
contradicting our assumption.
Thus $i=j$, so $\varphi(\alpha)=\psi(\alpha)$, so
$\varphi(\beta)=\psi(\beta)$, so $\varphi=\psi$, proving the claim.

Let $h$ denote the minimal polynomial of $\alpha+d\beta$. 
Then $\Q[\alpha+d\beta] \cong \Q[X]/(h)$, the degree of $h$ is
$\dim_\Q\Q[\alpha+d\beta]$, and $h(\alpha+d\beta)=0$, so
$h(\varphi(\alpha+d\beta))=0$ for all $\varphi\in\Phi$.
Thus, 
\begin{multline*}
\deg h\ge \#\{ z\in\C : h(z)=0\} \ge 
\#\{ \varphi(\alpha+d\beta) : \varphi\in\Phi \}
=\#\Phi  \\
=\dim_\Q \Q[\alpha,\beta]
\ge \dim_\Q \Q[\alpha+d\beta] = \deg h,
\end{multline*}
so all are equal.
It follows that $\Q[\alpha,\beta]=\Q[\alpha+d\beta]$.
\end{proof}

It follows from Algorithm \ref{primeltalgor} below
that finding a primitive element of $E$ can be done in
polynomial time.

\begin{thm}
\label{primeltcor}
Suppose that $E$ is a $\Q$-algebra, 
and suppose
$E_\sep=\Q[\alpha_1,\ldots,\alpha_t]$
with each $\alpha_i \in E$ integral over $\Z$.  
Let $f_i$ denote the minimal polynomial of $\alpha_i$, 
and for $i\in\{1,\ldots,t-1\}$ let $d_i$ be a 
positive integer such that $d_i^2 \nmid \Delta(f_i)$.
Then 
$$
E_\sep=\Q[\alpha_1+d_1\alpha_2+d_1d_2\alpha_3+\cdots +d_1d_2\cdots d_{t-1}\alpha_t].
$$
\end{thm}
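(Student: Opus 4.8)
The plan is to argue by induction on $t$. The case $t=1$ is exactly the hypothesis $E_\sep=\Q[\alpha_1]$ (and the case $t=0$, if admitted, is trivial since both sides equal $\Q$). For the inductive step I would assume $t\ge 2$ and regroup the proposed primitive element as
$$
\beta:=\alpha_1+d_1\alpha_2+d_1d_2\alpha_3+\cdots+d_1d_2\cdots d_{t-1}\alpha_t=\alpha_1+d_1\gamma,
\qquad\text{where}\qquad
\gamma:=\alpha_2+d_2\alpha_3+\cdots+d_2d_3\cdots d_{t-1}\alpha_t,
$$
so that the whole statement reduces to proving $\Q[\beta]=E_\sep$.

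The first step is to identify $\Q[\gamma]$. I would pass to the sub-$\Q$-algebra $E':=\Q[\alpha_2,\ldots,\alpha_t]\subseteq E$ (which is again a finite-dimensional $\Q$-algebra, being a $\Q$-subspace of $E$). Since $E'\subseteq\Q[\alpha_1,\ldots,\alpha_t]=E_\sep$, every element of $E'$ is separable over $\Q$, hence $E'_\sep=E'$; moreover $E'$ is generated over $\Q$ by $\alpha_2,\ldots,\alpha_t$, each integral over $\Z$, with respective minimal polynomials $f_2,\ldots,f_t$, and $d_i^2\nmid\Delta(f_i)$ holds for $i\in\{2,\ldots,t-1\}$. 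Thus $E'$, with this list of $t-1$ generators, satisfies the hypotheses of the theorem, and the induction hypothesis (the case $t-1$) gives $E'=\Q[\gamma]$. Since $\gamma$ is a polynomial in $\alpha_2,\ldots,\alpha_t$ while, conversely, each $\alpha_j$ with $j\ge 2$ is a polynomial in $\gamma$, it follows that $E_\sep=\Q[\alpha_1,\ldots,\alpha_t]=\Q[\alpha_1,\gamma]$.

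The second step is to invoke Proposition \ref{primeltprop} for the pair $\alpha_1,\gamma$. Both elements lie in $E_\sep$ and are therefore separable over $\Q$; $\alpha_1$ is integral over $\Z$ by hypothesis, and $\gamma$ is integral over $\Z$ because the elements of $E$ that are integral over $\Z$ form a subring of $E$, and each summand $d_2\cdots d_{j-1}\alpha_j$ of $\gamma$ is an integer multiple of the integral element $\alpha_j$. Since $f_1$ is the minimal polynomial of $\alpha_1$ and $d_1^2\nmid\Delta(f_1)$, Proposition \ref{primeltprop} yields $\Q[\alpha_1,\gamma]=\Q[\alpha_1+d_1\gamma]=\Q[\beta]$, which together with the first step closes the induction.

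I do not anticipate a genuine obstacle: the argument is just a clean iteration of Proposition \ref{primeltprop}, and the only parts that are not automatic are bookkeeping ones — checking that the intermediate element $\gamma$ is still separable over $\Q$ and integral over $\Z$, that $E'=\Q[\alpha_2,\ldots,\alpha_t]$ is a finite-dimensional $\Q$-algebra with $E'_\sep=E'$, and that the truncated list of discriminant conditions $d_i^2\nmid\Delta(f_i)$ is indexed correctly when the induction hypothesis is applied to $E'$.
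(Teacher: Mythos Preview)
Your proof is correct and takes essentially the same approach as the paper: both arguments establish the result by iterated application of Proposition~\ref{primeltprop}, at each step combining one original generator $\alpha_i$ (whose minimal polynomial $f_i$ and discriminant condition $d_i^2\nmid\Delta(f_i)$ are given) with an accumulated integral, separable tail. The paper writes this out as a direct chain of equalities starting from $\Q[\alpha_{t-1},\alpha_t]=\Q[\alpha_{t-1}+d_{t-1}\alpha_t]$ and working leftward, while you package the same iteration as a formal induction on $t$ via the subalgebra $E'=\Q[\alpha_2,\ldots,\alpha_t]$; the content is identical.
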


\begin{proof}
Each $f_i$ is a monic separable polynomial in $\Z[X]$.
Applying Proposition \ref{primeltprop} gives 
$$
\Q[\alpha_{t-1},\alpha_t]=
\Q[\alpha_{t-1}+d_{t-1}\alpha_t].
$$
Proceeding inductively, we have
\begin{multline*}
E_\sep=\Q[\alpha_1,\ldots,\alpha_{t-2},\alpha_{t-1}+d_{t-1}\alpha_t]
= \\
\Q[\alpha_1,\ldots,\alpha_{t-3},\alpha_{t-2}+d_{t-2}\alpha_{t-1}+d_{t-2}d_{t-1}\alpha_t]
= \cdots   \\
=
\Q[\alpha_1+d_1\alpha_2+d_1d_2\alpha_3+\cdots +d_1d_2\cdots d_{t-1}\alpha_t],
\end{multline*}
as required.
\end{proof}

Theorem \ref{primeltcor} yields
the following deterministic polynomial-time algorithm that proves 
Theorem \ref{primeltthminto}.

\begin{algorithm}
\label{primeltalgor}
Given a $\Q$-algebra $E$,  
this algorithm outputs $\alpha\in E$ such that
$E_\sep=\Q[\alpha]$.
\begin{enumerate}
\item
Applying Algorithms \ref{splittingalgor} and \ref{xyzalgor}, find
a $\Q$-basis $u_1,u_2,\ldots,u_t$ for $E_\sep$ as well as the
minimal polynomial $g_i$ of each $u_i$.
\item
For $i=1,2,\ldots,t$, find a non-zero integer $k_i$ for which
$k_ig_i\in\Z[X]$, compute the minimal polynomial 
$f_i = k_i^{\deg(g_i)}g_i(X/k_i)$ of $k_iu_i$,
as well as its discriminant $\Delta(f_i)$ and the
least positive integer $d_i$ for which
$d_i^2 \nmid \Delta(f_i)$.
\item
With $\alpha_i = k_iu_i$, output
$$
\alpha = \alpha_1 + d_1\alpha_2 + d_1d_2\alpha_3 + \cdots + d_1d_2\cdots d_{t-1}\alpha_t.
$$
\end{enumerate}
\end{algorithm}

\medskip

\noindent{\bf{Proof of Theorem \ref{primeltthminto}.}}
The first assertion of Theorem \ref{primeltthminto} follows from
Theorem \ref{primeltcor}; note that each $\Delta(f_i)$
is non-zero, so that $d_i$ exists.
For the second assertion, we show that
Algorithm \ref{primeltalgor} has the required properties.
Note that each $f_i$ is a monic polynomial in $\Z[X]$,
so the elements $x_i = k_iu_i$ ($1\le i \le t$) form a
$\Q$-basis for $E_\sep$ consisting of elements that are
integral over $\Z$.
Thus the correctness of the algorithm follows from 
Theorem \ref{primeltcor}.
It follows from Corollary 11.19 of \cite{Gathen}
that the computation of $\Delta(f_i)$ can be done
in polynomial time.
We have
$d_i \le {\frac{1}{2}}\log|\Delta(f_i)| + o(1)$ for 
$|\Delta(f_i)| \to\infty$, since
$$|\Delta(f_i)| \ge \lcm(1,2,\ldots,d_i-1)^2 = \exp(2d_i+o(1))$$
as $d_i\to\infty$ (see Chapter XXII of 
\cite{HardyWright}).
\qed

\begin{lem}
\label{primeltcriterion}
Let $E$ be a $\Q$-algebra, and let $\alpha\in E$ be such $E_\sep = \Q[\alpha]$.
Suppose that for each $\m\in\Spec(E)$ there exists
$\varepsilon_\m\in \sqrt{0}/\m\sqrt{0}$ with
$(E/\m)\varepsilon_\m = \sqrt{0}/\m\sqrt{0}$.
Then there exists $\varepsilon \in \sqrt{0}$ with
$E = \Q[\alpha+\ee]$.
\end{lem}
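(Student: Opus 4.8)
The plan is to use the primitive element $\alpha$ for $E_\sep$ as a ``skeleton'' and add a carefully chosen nilpotent correction $\ee\in\sqrt0$ so that $\Q[\alpha+\ee]$ captures all of $\sqrt0$ as well. Since $E = E_\sep\oplus\sqrt0$ and $E_\sep\cong\prod_{\m}E/\m$ by Theorems~\ref{Ewellknownthm0i} and~\ref{Ewellknownthm}(iv), knowing $\alpha$ already pins down the ``$E_\sep$-part'', and the obstruction to primitivity lives entirely in how $\sqrt0$ sits over each local factor. Concretely, I would first reduce to the local case: writing $E \cong \prod_{\m\in\Spec(E)} E_\m$ and using the hypothesis that each $\sqrt0/\m\sqrt0$ is generated by one element over $E/\m$, Nakayama's lemma (the maximal ideal of $E_\m$ is nilpotent, hence $\sqrt{0_{E_\m}}$ is, so Nakayama applies) gives that $\sqrt{0_{E_\m}}$ is a principal ideal of $E_\m$, say generated by some $\ee_\m$.

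Next I would assemble a candidate $\ee = (\ee_\m)_\m \in \sqrt0 = \prod_\m \sqrt{0_{E_\m}}$ and set $\beta = \alpha+\ee$; the goal is $\Q[\beta]=E$. The key point is that it suffices to check this after localizing at each $\m$: because the idempotents cutting out the factors $E_\m$ are polynomials in $\alpha$ (the restriction of $\alpha$ to $E_\sep$ hits the product decomposition, so the primitive idempotents of $E_\sep$, which equal those of $E$, lie in $\Q[\alpha]\subseteq\Q[\beta]$), one has $\Q[\beta]\cong\prod_\m \Q[\beta_\m]$ where $\beta_\m$ is the image of $\beta$ in $E_\m$. So I am reduced to: given a local $\Q$-algebra $E_\m$ with residue field $k=E/\m$, an element $\alpha_\m$ whose image generates $k$ over $\Q$ (equivalently $\Q[\alpha_\m] + \m = E_\m$), and $\ee_\m$ generating $\m = \sqrt{0_{E_\m}}$, show $\Q[\alpha_\m+\ee_\m] = E_\m$. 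Here the minimal polynomial $p$ of $\alpha_\m$ over $\Q$ need not be separable, but $\alpha_\m \bmod \m$ satisfies the separable polynomial $\hat p$ (product of distinct irreducible factors of $p$), and I can arrange that the other factors don't interfere; the cleanest route is to observe $\Q[\beta_\m]$ contains $\beta_\m$ and its image in $k$ generates $k$, so $\Q[\beta_\m]+\m = E_\m$, and then show $\m\subseteq\Q[\beta_\m]$.

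To get $\m \subseteq \Q[\beta_\m]$: let $q\in\Q[X]$ be a separable polynomial with $q(\alpha_\m)\equiv 0 \bmod\m$ (e.g.\ $q=\hat p$), so by Theorem~\ref{Ewellknownthm0i} applied inside $E_\m$ there is a unit-adjustment making $q'(\beta_\m)$ a unit in $E_\m$ up to nilpotents; more directly, $q(\beta_\m)\in\m$ and $q(\beta_\m) - q(\alpha_\m) \in \m$, while $q(\alpha_\m)\in\m$ and one can compute $q(\alpha_\m + \ee_\m) \equiv q(\alpha_\m) + q'(\alpha_\m)\ee_\m \bmod \m^2$. Since $q$ is separable and $q(\alpha_\m)\in\m$, $q'(\alpha_\m)$ is a unit in $E_\m$, so $q(\beta_\m) \equiv u\,\ee_\m \bmod \m^2$ for a unit $u$. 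Iterating (replacing $q$ by suitable powers or using the surjectivity onto $\m/\m^2$ together with the fact that $\m^{r}=0$ for some $r$), the subring $\Q[\beta_\m]$, which already surjects onto $k=E_\m/\m$, hits a generator of $\m/\m^2$ and hence, by Nakayama together with $\m$ being nilpotent and principal, contains all of $\m$. Therefore $\Q[\beta_\m] = E_\m$ for every $\m$, so $\Q[\alpha+\ee] = E$.

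\textbf{Main obstacle.} The delicate step is the local computation showing $q(\beta_\m)$ contributes a \emph{unit} multiple of the generator $\ee_\m$ modulo $\m^2$, i.e.\ extracting enough of $\m$ from $\Q[\beta_\m]$; one must handle the possible inseparability of the minimal polynomial of $\alpha_\m$ over $\Q$ and make sure the cross-terms among the different primes (controlled via the idempotents in $\Q[\alpha]$) really do decouple $\Q[\beta]$ into the local pieces. Once the decoupling is justified and the single-prime generation statement is proved, the rest is bookkeeping with Nakayama's lemma and the nilpotence of the maximal ideals.
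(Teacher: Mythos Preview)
Your approach is correct and rests on the same two ingredients the paper uses: the Taylor expansion $f(\alpha+\ee)\equiv f'(\alpha)\,\ee\bmod(\text{square of the radical})$ with $f'(\alpha)$ a unit, followed by a Nakayama-type iteration exploiting nilpotence. The organizational difference is that the paper works globally in $E$---it lifts a single generator of $\sqrt{0}/\sqrt{0}^2$, sets $F=\Q[\alpha+\ee]$, and shows directly that $E=F+\sqrt{0}$ and $\sqrt{0}=\sqrt{0_F}E+\sqrt{0}^2$, then iterates---whereas you first split $E$ into its local factors $E_\m$ and argue in each one. The paper's route is shorter precisely because it avoids your decoupling step.

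Two remarks on that decoupling step. First, the inclusion $\Q[\alpha]\subseteq\Q[\beta]$ that you invoke is true but not obvious, and you should justify it: apply Theorem~\ref{Ewellknownthm0i} to the $\Q$-algebra $\Q[\beta]$ to write $\beta=u+v$ with $u\in\Q[\beta]_\sep$ and $v\in\sqrt{0_{\Q[\beta]}}$, and then uniqueness of the Jordan--Chevalley decomposition in $E$ forces $u=\alpha$, so $\alpha\in\Q[\beta]$. (Once you have this, incidentally, you also get $\ee=\beta-\alpha\in\Q[\beta]$, and a short global argument finishes: $\sqrt{0}=E\ee$ by Nakayama, hence $\sqrt{0}=\sum_{i\ge1}E_\sep\,\ee^i\subseteq\Q[\alpha,\ee]\subseteq\Q[\beta]$; the localization becomes unnecessary.) Second, your worry about the minimal polynomial of $\alpha_\m$ being inseparable is unfounded: since $\alpha\in E_\sep$, its image $\alpha_\m$ lies in $(E_\m)_\sep$, which by Theorem~\ref{Ewellknownthm}(iv) is the field $E/\m$; the minimal polynomial of $\alpha_\m$ is therefore irreducible and separable, so you may take $q$ to be exactly that polynomial.
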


\begin{proof}
Since $E/\sqrt{0} \cong \bigoplus_{\m\in\Spec(E)} E/\m$ as $E$-modules, 
we have
$$
\sqrt{0}/\sqrt{0}^2 = \sqrt{0} \otimes_E (E/\sqrt{0}) \cong
\bigoplus_\m (\sqrt{0} \otimes_E (E/\m)) = 
\bigoplus_\m \sqrt{0}/\m\sqrt{0} = 
\bigoplus_\m (E/\m)\varepsilon_\m
$$
for some $\varepsilon_\m \in \sqrt{0}/\m\sqrt{0}$.
Pick $\eee\in\sqrt{0}/\sqrt{0}^2$ mapping to 
$(\varepsilon_\m)_{\m\in\Spec(E)} \in \bigoplus_\m \sqrt{0}/\m\sqrt{0}$.
Since $(\varepsilon_\m)_{\m\in\Spec(E)}$ generates $\bigoplus_\m \sqrt{0}/\m\sqrt{0}$
as a module over $\prod_\m E/\m \cong E/\sqrt{0}$, it follows that
$\eee$ generates $\sqrt{0}/\sqrt{0}^2$
as a module over $E/\sqrt{0}$.
We have $E = E_{\sep} \oplus \sqrt{0}$ and $E_{\sep} = \Q[\alpha]$
for some $\alpha\in E$. Choose $\ee \in \sqrt{0}$
mapping to $\eee$, so 
$E\cdot\ee + \sqrt{0}^2 = \sqrt{0}$.
Let $f$ be the minimal polynomial of $\alpha$. 
Then $f$ is separable.
Since $f(\alpha)=0$ we have $f(\alpha+\ee) \equiv 0$ mod $(\ee)$,
so there exists $n\in \Z_{>0}$ such that 
$f(\alpha+\ee)^n = 0$.
Since $f$ is separable,
we have $f'(\alpha) \in E^\ast$ 
and $f'(\alpha+\ee) \in E^\ast$.
Also, 
$$
f(\alpha+\ee) \equiv f(\alpha) + \ee f'(\alpha) \bmod \sqrt{0}^2,
$$
so $f(\alpha+\ee) \in E^\ast\cdot\ee + \sqrt{0}^2$.
Thus, $F = \Q[\alpha+\ee]$ is a subring of $E$ mapping
onto $E/\sqrt{0} = \Q[\overline{\alpha}]$ where 
$\overline{\alpha} = \alpha + \sqrt{0} = \alpha+\ee + \sqrt{0}$,
and $\sqrt{0_F} = F \cap \sqrt{0}$ maps onto $\sqrt{0}/\sqrt{0}^2$.
It follows that 
$E = F + \sqrt{0}$
and $\sqrt{0} = \sqrt{0_F}E + \sqrt{0}^2.$

Copying the proof of Lemma 7.4 in Chapter II of \cite{Hartshorne},
we have
$$
\sqrt{0} = \sqrt{0_F}E + \sqrt{0}^2 =  
\sqrt{0_F}E + \sqrt{0_F}\sqrt{0} + \sqrt{0}^3 = 
\sqrt{0_F}E + \sqrt{0}^3 = \ldots = \sqrt{0_F}E 
$$
since $\sqrt{0}$ is nilpotent.
Now 
$$
E = F + \sqrt{0} = F + \sqrt{0_F}E = 
F + \sqrt{0_F}(F + \sqrt{0_F}E) = F + \sqrt{0_F}^2E = \cdots = F
= \Q[\alpha+\ee],
$$
as desired.
\end{proof}

\medskip

\noindent{\bf{Proof of Theorem \ref{equivthminto}(i).}}
We first show (a) $\Rightarrow$ (d).
If $E = \Q[\alpha]$, then $E \cong \Q[X]/(g)$ for some $g\in\Q[X]$, and
since $\Q[X]$ is a principal ideal domain, each ideal of $E$ is principal.

The direction (d) $\Rightarrow$ (c) is obvious.

We now show (c) $\Rightarrow$ (b).
If $\sqrt{0}$ is a principal $E$-ideal, then the $E/\m$-vector space 
$\sqrt{0}\otimes_E E/\m = \sqrt{0}/\m\sqrt{0}$ is generated by a single element,
so it has dimension at most $1$.

Finally, we show (b) $\Rightarrow$ (a).
Let $\varepsilon_\m$ generate the $E/\m$-vector space
$\sqrt{0}/\m\sqrt{0}$, let $\alpha$ be a primitive element for $E_\sep$
(Theorem \ref{primeltthminto}), and apply Lemma \ref{primeltcriterion}.
\qed

\begin{ex}
\label{noprimeltexample}
We give an example of a  $\Q$-algebra $E$ that does
not have a primitive element.
Let $E=\Q[X,Y]/(X^2,XY,Y^2)$.
Then $E = \Q\cdot 1 \oplus \Q\cdot x \oplus \Q\cdot y$
where $x$ and $y$ are the images in $E$ of $X$ and $Y$, respectively.
Then $E_\sep = \Q\cdot 1$ and $\sqrt{0} = \Q\cdot x \oplus \Q\cdot y$.
The unique maximal ideal is $\m=\sqrt{0}$.
We have $\sqrt{0}^2=0$.
Thus, $\sqrt{0}/\m\sqrt{0} = \sqrt{0}/\sqrt{0}^2 = \sqrt{0}$
and $\dim_{E/\m} (\sqrt{0}/\m\sqrt{0}) = 2$.
If $z\in E$, then $z=a+bx+cy$ for some $a,b,c\in\Q$.
Then $(z-a)^2=0$, so $\dim_\Q \Q[z] \le 2 < 3 =\dim_\Q E$.
\end{ex}

\section{Decomposing $\Q$-algebras}
\label{thm13pfsect}

In this section we give the algorithms for Theorems \ref{Ewellknownalg},
 \ref{idempotalgorthm}, 
and \ref{equivthminto}(ii).
The next result (along with Theorems \ref{Ewellknownthm0i}
and  \ref{Ewellknownthm0ii}) shows that to compute
$\Spec(E)$, it suffices to compute $\Spec(E_\sep)$.

\begin{lem}
\label{ipilem}
If $E$ is a $\Q$-algebra, then
the map 
$$
i^\ast : \Spec(E) \to \Spec(E_\sep), \qquad 
\m \mapsto \m \cap E_\sep
$$ 
is bijective.
\end{lem}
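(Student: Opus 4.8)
The plan is to exploit the direct-sum decomposition $E = E_\sep \oplus \sqrt{0}$ from Theorem \ref{Ewellknownthm0i} together with the fact that $\sqrt{0}$ is contained in every prime ideal of $E$. Write $i : E_\sep \hookrightarrow E$ for the inclusion, so $i^\ast(\m) = \m \cap E_\sep$; since $i$ is a ring homomorphism, $i^\ast$ indeed lands in $\Spec(E_\sep)$, and the only thing to check is bijectivity.

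First I would handle surjectivity. Let $\nn \in \Spec(E_\sep)$. I want a prime $\m$ of $E$ with $\m \cap E_\sep = \nn$; the natural candidate is $\m = \nn E + \sqrt{0}$, i.e.\ the preimage in $E$ of the image of $\nn$ under $E_\sep \isom E/\sqrt{0}$. Concretely, since the composite $E_\sep \hookrightarrow E \onto E/\sqrt{0}$ is an isomorphism (Theorem \ref{Ewellknownthm}(iii) gives $\ker = \sqrt{0}$ and surjectivity), the ideal $\nn$ corresponds to an ideal $\overline{\m}$ of $E/\sqrt{0}$ with $(E/\sqrt{0})/\overline{\m} \cong E_\sep/\nn$, which is a field because $E_\sep$ is a finite-dimensional $\Q$-algebra that is a product of fields (Theorem \ref{Ewellknownthm}(iv)), so $\overline{\m}$ is maximal; pulling back to $E$ gives a maximal, hence prime, ideal $\m$ with $\m \supseteq \sqrt{0}$. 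Then $\m \cap E_\sep$ maps isomorphically (under $E_\sep \isom E/\sqrt{0}$) to $\overline{\m}$, so $\m \cap E_\sep = \nn$. Here I am using that $E_\sep \to E/\sqrt{0}$ is an isomorphism identifying $\nn$ with $\overline{\m}$ and $\m \cap E_\sep$ with the same thing.

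For injectivity, suppose $\m_1, \m_2 \in \Spec(E)$ with $\m_1 \cap E_\sep = \m_2 \cap E_\sep =: \nn$. Both $\m_1$ and $\m_2$ contain $\sqrt{0}$ (every prime does), so each $\m_j$ is the preimage in $E$ of a prime $\overline{\m}_j$ of $E/\sqrt{0}$, and under the isomorphism $E_\sep \isom E/\sqrt{0}$ the ideal $\nn = \m_j \cap E_\sep$ corresponds to $\overline{\m}_j$. Hence $\overline{\m}_1 = \overline{\m}_2$, so $\m_1 = \m_2$. Equivalently, and perhaps more cleanly: the two maps $\Spec(E) \to \Spec(E/\sqrt{0})$ (pullback along $E \onto E/\sqrt{0}$) and $\Spec(E/\sqrt{0}) \to \Spec(E_\sep)$ (pullback along the isomorphism $E_\sep \isom E/\sqrt{0}$) are both bijections — the first because $\sqrt{0}$ is nilpotent (nilpotents lie in every prime, so $\Spec$ is insensitive to them), the second because it is induced by a ring isomorphism — and their composite is exactly $i^\ast$ up to the identification $\m \cap E_\sep \leftrightarrow \overline{\m}$. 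I expect the main (only) subtlety is verifying that this composite really is $i^\ast$, i.e.\ chasing that $\m \cap E_\sep$, the image of $\m$ in $E/\sqrt{0}$, and $\overline{\m}$ all correspond under the isomorphism $E_\sep \isom E/\sqrt{0}$; once that is pinned down, bijectivity is immediate. I would therefore phrase the argument as the factorization $i^\ast = (\text{iso on } \Spec) \circ (\text{nilradical-quotient map})$ and invoke that each factor is a bijection.
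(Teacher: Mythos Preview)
Your proposal is correct and follows essentially the same route as the paper: factor $i^\ast$ through $\Spec(E/\sqrt{0})$, using that $\pi^\ast:\Spec(E/\sqrt{0})\to\Spec(E)$ is a bijection (every prime contains $\sqrt{0}$) and that $\pi\circ i:E_\sep\to E/\sqrt{0}$ is an isomorphism, so $(\pi\circ i)^\ast = i^\ast\circ\pi^\ast$ is a bijection and hence so is $i^\ast$. The paper simply states this factorization in three lines, whereas you first spell out surjectivity and injectivity by hand before arriving at the same clean argument; your final paragraph is exactly the paper's proof.
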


\begin{proof}
Let $i : E_\sep \to E$ and $\pi : E \to E/\sqrt{0}$ be the inclusion 
and projection maps, respectively.
The induced map $\pi^\ast : \Spec(E/\sqrt{0}) \to \Spec(E)$
is bijective, since every prime ideal of $E$ contains
$\sqrt{0}$.
The composition $\pi \circ i : E_\sep \to E/\sqrt{0}$ is an isomorphism, since
$E = E_\sep \oplus \sqrt{0}$  as in Theorem \ref{Ewellknownthm0i}.
Thus $i^\ast \circ\pi^\ast$ is bijective.
It follows that $i^\ast$ is bijective.
\end{proof}

\begin{algorithm}
\label{findmalgor}
Given a $\Q$-algebra $E$, the algorithm finds all
$\m\in\Spec(E)$, the fields $E/\m$, 
the $\Q$-algebras $E_\m$,  the primitive idempotents of $E$,
the natural maps 
$$E \to E/\m, \quad
E \to E_\m,  \quad
E_\m \to E/\m,  \quad
E_\sep \to \prod_{\m\in\Spec(E)}E/\m,  \quad
E \to \prod_{\m\in\Spec(E)}E_\m,
$$
and the inverses of the latter two maps.

\begin{enumerate}
\item
Apply Algorithm \ref{primeltalgor}
to produce
$\alpha\in E$ such that $E_\sep = \Q[\alpha]$.
\item
Apply Algorithm \ref{splittingalgor} to obtain a basis for $\sqrt{0}$.
\item
Apply Algorithm \ref{xyzalgor} to 
compute the minimal polynomial $f\in\Q[Y]$ of $\alpha$. 
\item
Use the LLL algorithm \cite{LLL} to factor $f$ into monic irreducible factors in $\Q[Y]$.
\item
For each monic irreducible factor $g$ of $f$ in $\Q[Y]$,
output $\{\alpha^ig(\alpha)\}_{i=0}^{\deg(f/g)-1}$, along with
the basis obtained in step (ii), 
as a basis for a prime ideal $\m\in\Spec(E)$, 
with the elements expressed on the given basis for $E$.
\item
For each $\m\in\Spec(E)$ obtained in step (v),
output $\{\alpha^i\bmod{\m}\}_{i=0}^{\deg(g)-1}$ 
as a basis for $E/\m$, and use linear algebra 
to compute a matrix describing the map $E\to E/\m$.
Then compute the composition $E_\sep \to E \to  \prod_{\m\in\Spec(E)}E/\m$
and invert the matrix for this map 
to produce the inverse map
$\prod_{\m\in\Spec(E)}E/\m 
\to 
E_\sep.$
For each $\m\in\Spec(E)$, compute the image 
$e_\m\in E_\sep$ under the latter map 
of the element that has $1$ in the $\m$-th coordinate
and $0$ everywhere else.
Output $\{ e_\m\}_{\m\in\Spec(E)}$
as the set of primitive idempotents of $E$, 
and output $e_\m E$ as the localization $E_\m$.
The map $E \to E_\m=e_\m E$ is multiplication by $e_\m$,
and this gives the map
$$E \to \prod_{\m\in\Spec(E)}E_\m = \prod_{\m\in\Spec(E)}e_\m E.$$
Its inverse is $(y_\m)_{\m\in\Spec(E)} \mapsto \sum_\m y_\m$.
The map $E_\m \to E/\m$ is $e_\m E \subset E \to E/\m$.
\end{enumerate}
\end{algorithm}

\medskip

\noindent{\bf{Proof of Theorems \ref{Ewellknownalg} and \ref{idempotalgorthm}.}}
The map 
$g \mapsto (g(\alpha))$ is a bijection from the set of
monic irreducible factors of $f$ in $\Q[Y]$ to $\Spec(E_\sep)$.
The set $\{\alpha^ig(\alpha)\}_{i=0}^{\deg(f/g)-1}$ 
in step (v) is a basis for the 
prime ideal $g(\alpha)E_\sep\in\Spec(E_\sep)$.
This basis, along with a basis for $\sqrt{0}$, gives a basis
for the prime ideal 
$$
\m=g(\alpha)E_\sep+\sqrt{0} = (i^\ast)^{-1}(g(\alpha)E_\sep)
$$
of $E$, where $i^\ast$ is the bijection of Lemma~\ref{ipilem}.
Step (v) produces all $\m \in\Spec(E)$ by Lemma~\ref{ipilem}.
Since $Y^2-Y$ is a separable polynomial, the idempotents of $E$
are the same as the idempotents of $E_\sep$.
That $E_\m$ and $e_\m E$ are isomorphic as $\Q$-algebras is seen
as in the proof of Theorem \ref{Ewellknownthm} in section \ref{thm12pfsect}
if one realizes that  $e_\m E$ and $E/(1-e_\m)E$ are isomorphic as $\Q$-algebras.
The correctness of the algorithm now follows, and
it runs in polynomial time since the constituent pieces do.
\qed

\medskip

\noindent{\bf{Proof of Theorem \ref{equivthminto}(ii).}}
We next give the algorithm for Theorem \ref{equivthminto}(ii).

\begin{algorithm}
\label{primeltEalg}
Given a $\Q$-algebra $E$, the algorithm 
decides whether $E$ has a primitive element,
and produces one if it does.
\begin{enumerate}
\item
Apply Algorithms \ref{findmalgor} and \ref{splittingalgor} to compute $\Spec(E)$ and $\sqrt{0}$,
respectively.
\item
For each $\m\in\Spec(E)$, use linear algebra to compute 
$c_\m = \dim_{\Q} (\sqrt{0}/\m\sqrt{0})$ and
$d_\m = \dim_{\Q} (E/\m)$.
\item
If for some $\m\in\Spec(E)$ we have $c_\m > d_\m$, terminate with ``no''.
\item
For each $\m$, if $c_\m = 0$ let $\varepsilon_\m =0$ and otherwise
let $\varepsilon_\m$ be any non-zero element of $\sqrt{0}/\m\sqrt{0}$.
\item
Compute 
$\ee \in \sqrt{0}$
mapping to $(\varepsilon_\m)_{\m}\in \bigoplus_\m (\sqrt{0}/\m\sqrt{0}),$
by  inverting the matrix giving the natural isomorphism
$$
\sqrt{0}/\sqrt{0}^2 \isom \bigoplus_{\m\in\Spec(E)} (\sqrt{0}/\m\sqrt{0}),
$$
using linear algebra.
\item
Apply Algorithm \ref{primeltalgor} to produce
$\alpha\in E_\sep$ such that $E_\sep=\Q[\alpha]$. Output $\alpha+\ee$,
a primitive element for $E$.
\end{enumerate}
\end{algorithm}

Note that
$$
\dim_{E/\m} (\sqrt{0}/\m\sqrt{0}) = 
\frac{\dim_{\Q} (\sqrt{0}/\m\sqrt{0})}{\dim_{\Q} (E/\m)} = {\frac{c_\m}{d_\m}}.
$$
Hence (a) $\Leftrightarrow$ (b) of Theorem \ref{equivthminto}(i) and the construction of
$\varepsilon$ in Lemma \ref{primeltcriterion} prove that  Algorithm \ref{primeltEalg}
is correct. It clearly runs in polynomial time.
This proves Theorem \ref{equivthminto}(ii).

\section{Discrete logarithm algorithm in $\Q$-algebras}
\label{thm14pfsect}

In this section we prove  Theorem \ref{Estarthm}.

Let $E$ be a $\Q$-algebra. We denote the composition of the map
$E \to E_\sep \oplus \sqrt{0}$ from Theorem~\ref{Ewellknownthm0ii}
with the natural projection $E_\sep \oplus \sqrt{0} \to E_\sep$
by $\pi : E \to E_\sep$; in other words, $\pi(\alpha)$ is, for
$\alpha\in E$, the unique element of $E_\sep$ for which
$\alpha-\pi(\alpha)$ is nilpotent.
Equivalently, $\pi$ may be described as the composition of the ring
homomorphism $E \to \prod_{\m\in\Spec(E)}E/\m$ from
Theorem \ref{Ewellknownthm}(iii) with the
inverse of the isomorphism $E_\sep \to \prod_{\m\in\Spec(E)}E/\m$ from
Theorem \ref{Ewellknownthm}(iv). The map $\pi$ is a ring
homomorphism that is the identity on $E_\sep$ and that has kernel
$\sqrt{0}$. Each of Theorem \ref{Ewellknownthm0ii} and Theorem \ref{Ewellknownalg}
shows that there is a polynomial-time algorithm that, given $E$,
produces the matrix describing $\pi$.

\begin{prop}
\label{W1}
Let $E$ be a $\Q$-algebra and let $\pi$ be as just defined.
Then there is a group isomorphism of multiplicative groups
$$
E^* \to  (1+\sqrt{0}) \times \prod_{\m\in\Spec(E)}(E/\m)^*,
\quad 
\alpha \mapsto (\alpha/\pi(\alpha),(\alpha+\m)_{\m\in\Spec(E)}),
$$
and there is a group isomorphism 
$$
\log : 1+\sqrt{0} \to \sqrt{0},
\quad 
1-v \mapsto -\sum_{i=1}^{m-1} v^i/i
$$
from a multiplicative group to an additive group,
where $m\in\Z_{\ge 0}$ is such that $\sqrt{0}^m = 0$.
\end{prop}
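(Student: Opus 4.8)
The plan is to establish the two isomorphisms separately, treating the first as a formal consequence of earlier structure theory and the second as a concrete power-series computation. For the first map, the key observation is that $\pi:E\to E_\sep$ is a ring homomorphism (noted just before the proposition) whose kernel is $\sqrt{0}$. I would first check that $\pi$ restricts to a group homomorphism $E^*\to E_\sep^*$, and that for $\alpha\in E^*$ one has $\alpha/\pi(\alpha)\in 1+\sqrt{0}$, since $\alpha-\pi(\alpha)\in\sqrt{0}$ and $\pi(\alpha)\in E^*$. Combining this with the composite $E\to\prod_\m E/\m$ from Theorem~\ref{Ewellknownthm}(iii), which factors through $E_\sep$ via the isomorphism of Theorem~\ref{Ewellknownthm}(iv), gives the candidate map; its inverse sends $(1-v,(\beta_\m)_\m)$ to $(1-v)\cdot\sigma((\beta_\m)_\m)$, where $\sigma:\prod_\m E/\m\to E_\sep\subset E$ is the inverse of the Theorem~\ref{Ewellknownthm}(iv) isomorphism. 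One then verifies this is a two-sided inverse, which is routine given that $\sqrt{0}=\ker\pi$ and that $1+\sqrt{0}$ is exactly the fiber of $\pi|_{E^*}$ over $1$.

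\emph{Second,} for the logarithm I would set $m\in\Z_{\ge0}$ with $\sqrt{0}^m=0$ and define, for $v\in\sqrt{0}$, $\log(1-v)=-\sum_{i=1}^{m-1}v^i/i$ (a finite sum, since $v^m=0$), together with the candidate inverse $\exp(w)=\sum_{i=0}^{m-1}w^i/i!$ for $w\in\sqrt{0}$. The content is that $\log$ is a homomorphism from $(1+\sqrt{0},\cdot)$ to $(\sqrt{0},+)$ and that $\exp$ inverts it. I would handle this via the formal-power-series identities $\log((1-v)(1-w))=\log(1-v)+\log(1-w)$ and $\exp(\log(1-v))=1-v$, which hold in $\Q[[v,w]]$ (or in $\Q[v,w]$ modulo the ideal generated by all monomials of degree $\ge m$); since everything lands in $\sqrt{0}$ and $\sqrt{0}^m=0$, these truncated identities specialize correctly in $E$. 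Here it matters that $E$ is a $\Q$-algebra so that division by $i$ and by $i!$ makes sense.

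\textbf{The main obstacle} I anticipate is not any single deep step but making the power-series bookkeeping clean: one must be careful that the truncations "at level $m$" are consistent, i.e.\ that $\log$ and $\exp$ are genuinely inverse as maps $\sqrt{0}\to\sqrt{0}$ and not merely inverse up to higher-order terms that happen to vanish. The cleanest way is to work in the ring $A=\Q[[t]]/(t^m)$, or better in $\Q[t]$ truncated, prove the identities there once and for all using the standard $\log$/$\exp$ formal identities, and then invoke the universal property: any $v\in\sqrt{0}$ with $v^m=0$ induces a $\Q$-algebra map $A\to E$, $t\mapsto v$, under which the formal identities transport to $E$. This reduces the verification to the classical formal case and avoids ad hoc manipulation inside $E$. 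The first isomorphism, by contrast, should follow almost immediately from the properties of $\pi$ already recorded in the text, so I expect it to be short.

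Putting these together, the direct product decomposition follows because $E^*\cong E_\sep^*\times(1+\sqrt{0})$ via $\alpha\mapsto(\pi(\alpha),\alpha/\pi(\alpha))$ and $E_\sep^*\cong\prod_\m(E/\m)^*$ from Theorem~\ref{Ewellknownthm}(iv), and then $1+\sqrt{0}\cong\sqrt{0}$ via $\log$; combining the first two displays gives precisely the stated map $\alpha\mapsto(\alpha/\pi(\alpha),(\alpha+\m)_\m)$.
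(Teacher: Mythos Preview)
Your proposal is correct and follows essentially the same approach as the paper: split $E^*$ as $(1+\sqrt{0})\times E_\sep^*$ via the section $E_\sep\subset E$ of $\pi$, identify $E_\sep^*$ with $\prod_\m(E/\m)^*$ via Theorem~\ref{Ewellknownthm}(iv), and handle $\log$ by writing down the truncated exponential as its inverse. The paper dismisses the $\log$/$\exp$ verification as ``routine,'' whereas you spell out a clean way to do it via formal power series and specialization, but the underlying argument is the same.
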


\begin{proof}
For each $v\in\sqrt{0}$, one has
$(1-v)^{-1}=\sum_{i=0}^m v^i$ for $m$ sufficiently large,
so $1+\sqrt{0} \subset E^\ast$; it is a subgroup of $E^\ast$
since it is the kernel of the group homomorphism
$E^\ast\to E_\sep^\ast$ induced by $\pi$.
The inclusion map $E_\sep^\ast \subset E^\ast$ provides a
splitting of the short exact sequence
$$
1 \to 1+\sqrt{0} \to E^\ast\to E_\sep^\ast \to 1,
$$
so one has
$E^* \isom   (1+\sqrt{0})  \times E_\sep^\ast$, 
$\alpha \mapsto (\alpha/\pi(\alpha),\pi(\alpha))$.
The isomorphism 
$$
E_\sep \isom \prod_{\m\in\Spec(E)}E/\m
$$
from Theorem \ref {Ewellknownthm}(iv)
induces an isomorphism
$$
E_\sep^\ast \isom \prod_{\m\in\Spec(E)}(E/\m)^\ast
$$
of multiplicative groups. The first isomorphism in
Proposition \ref{W1} follows.

Since $\sqrt{0}$ is a finitely generated ideal consisting of nilpotents, it is
nilpotent itself, so $m$ as in the proposition exists.
One now proves in a routine manner that the map
$\log$ is a group isomorphism, the inverse $\exp$ being given by
$\exp(y)=\sum_{i=0}^{m-1}y^i/i!$.
\end{proof}

\begin{prop}
\label{W2}
There is a deterministic polynomial-time algorithm that, given 
a $\Q$-algebra $E$ that is a field and a finite system $S$ of elements of $E^\ast$, 
determines a $\Z$-basis for the kernel of the group homomorphism
$$
\Z^S \to E^\ast, \qquad (m_s)_{s\in S} \mapsto \prod_{s\in S} s^{m_s}.
$$
\end{prop}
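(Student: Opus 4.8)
The plan is to reduce Proposition~\ref{W2} to the algorithm of Ge~\cite{Ge}, which solves exactly this problem for a number field presented by an irreducible polynomial over $\Q$. The only work on our side is to put $E$ into that shape and to translate the input system $S$ accordingly; there is no elementary argument for the core of the statement.

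First I would note that, because $E$ is a field, every element of $E$ is separable over $\Q$: for $\alpha\in E$ the subring $\Q[\alpha]$ is a domain, so its minimal polynomial is irreducible, hence squarefree, hence $\alpha\in E_\sep$ by Lemma~\ref{lemma41}. Thus $E=E_\sep$, and Algorithm~\ref{primeltalgor} (Theorem~\ref{primeltthminto}) produces in polynomial time an $\alpha\in E$ with $E=\Q[\alpha]$ together with its minimal polynomial $f\in\Q[X]$; since $E$ is a field, $f$ is irreducible, so $E\cong\Q[X]/(f)$ is a number field given in the standard way (clearing denominators of $f$ if the form of Ge's input requires it is a cosmetic normalization). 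Using the change of basis between the given $\Q$-basis of $E$ and the power basis $1,\alpha,\dots,\alpha^{\deg f-1}$, I would rewrite each $s\in S$, in polynomial time, as a polynomial in $\alpha$ of degree $<\deg f$. Then I would call Ge's algorithm on the number field $\Q[X]/(f)$ and the elements of $S$: it computes, in deterministic polynomial time, a $\Z$-basis for the lattice of multiplicative relations
$$
L=\Big\{\,(m_s)_{s\in S}\in\Z^S \;:\; \prod_{s\in S}s^{m_s}=1\,\Big\},
$$
which is precisely the kernel of the homomorphism in the statement. Since this kernel is a subgroup of $\Z^S$ it is automatically free of finite rank, so ``a $\Z$-basis'' makes sense; outputting that basis, with the bit-size bookkeeping needed to confirm the polynomial running time, completes the argument.

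The main obstacle is concentrated entirely in the cited theorem of Ge: even deciding whether a single element of a number field is a product of prescribed integral powers of given elements is already delicate, and Ge's analysis of the relation lattice is what makes the polynomial time bound possible. By contrast, everything I add around it — exhibiting a primitive element, observing that $E$ being a field forces $f$ irreducible, and the linear-algebraic passage from the structure-constant presentation to $\Q[X]/(f)$ — is routine and polynomial-time by results already established in the excerpt. The one point I would take care to verify is that the presentation of a number field expected by Ge's algorithm agrees with the output of Algorithm~\ref{primeltalgor}, but this is only a matter of normalization and introduces no essential difficulty.
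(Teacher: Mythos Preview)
Your proposal is correct and follows the same approach as the paper, which simply cites Ge~\cite{Ge} for both the algorithm and the proof. You add the (reasonable) preprocessing step of passing from the structure-constant presentation of $E$ to a primitive-element presentation $\Q[X]/(f)$, which the paper leaves implicit.
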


\begin{proof}
See \cite{Ge}, both for the algorithm and the proof.
\end{proof}

\begin{algorithm}
\label{W3}
This algorithm takes as input 
a $\Q$-algebra $E$ and a finite system $S$ of elements of $E$. 
It decides whether one has $S \subset E^\ast$, and if so computes 
a finite set of generators for the kernel of the group homomorphism
$$
\Z^S \to E^\ast, \qquad (m_s)_{s\in S} \mapsto \prod_{s\in S} s^{m_s}.
$$
\end{algorithm}

\begin{enumerate}
\item
Compute all $\m\in\Spec(E)$ and all maps $E\to E/\m$ (Theorem \ref{Ewellknownalg}),
and compute $s+\m \in E/\m$ for all $s\in S$ and $\m\in\Spec(E)$.
If at least one of the elements $s+\m$ is zero, answer ``no'' 
(i.e., $S \not\subset E^\ast$) and terminate.
\item
For each $\m\in\Spec(E)$, determine (using the algorithm in Proposition \ref{W2}) 
a $\Z$-basis for the kernel 
$H_\m$ of the group homomorphism
$$
\Z^S \to (E/\m)^\ast, \qquad (m_s)_{s\in S} \mapsto \prod_{s\in S} (s+\m)^{m_s}.
$$
\item
Find the smallest $m\in\Z_{> 0}$ with $\sqrt{0}^m = 0$, and for
each $s\in S$, compute $\log(s/\pi(s))\in \sqrt{0}$,
using a matrix for $\pi$ and the formula for $\log$ in Proposition \ref{W1}.
\item
Compute a basis for the kernel $H$ of the group homomorphism
$$
\Z^S \to \sqrt{0}, \qquad (m_s)_{s\in S} \mapsto 
\sum_{s\in S} {m_s}\log(s/\pi(s)),
$$ 
by applying the 
kernel algorithm in \S 14 of
\cite{HWLMSRI} to an integer multiple of the rational
matrix describing the map. 
\item
Compute a basis $B$ for $H \cap \bigcap_{\m\in\Spec(E)} H_\m \subset \Z^S$
by applying the kernel algorithm in \S 14 of
\cite{HWLMSRI} to  the group homomorphism
$$H \oplus \bigoplus_{\m\in\Spec(E)} H_\m \to \bigoplus_{\m\in\Spec(E)}\Z^S,
\quad
(h,(h_\m)_{\m\in\Spec(E)})\mapsto (h-h_\m)_{\m\in\Spec(E)}.$$
\item
Output the image of $B$ under the projection from
$H \oplus \bigoplus_{\m\in\Spec(E)} H_\m$ to its $H$-component.
\end{enumerate}

\begin{algorithm}
\label{Estarthm2algor}
The algorithm takes a $\Q$-algebra $E$, a finite system $S$ of elements of $E^\ast$
and $t\in E^\ast$, and decides whether $t$ belong to the subgroup
$\langle S\rangle$ of $E^\ast$ generated by $S$, and if so produces 
$(m_s)_{s\in S} \in \Z^S$ with $t=\prod_{s\in S}s^{m_s}$.
\end{algorithm}

\begin{enumerate}
\item
Apply Algorithm \ref{W3} with $T = \{ t\} \cup S$ in place of $S$
to obtain a finite set of generators $U_T$ for the kernel of the group homomorphism
$$
g_T : \Z^{\{ t\}} \times\Z^S = \Z^T \to E^\ast, \qquad
(m_t,(m_s)_{s\in S}) \mapsto t^{m_t}\prod_{s\in S} s^{m_s}.
$$
\item
Map the elements $u\in U_T \subset \Z^{\{ t\}} \times\Z^S$ to their
$\Z^{\{ t\}}$-components $u(t) \in\Z$.
If $\sum_{u\in U_{T}} u(t)\Z \neq \Z$ then $t\not\in \langle S \rangle$;
if $1=\sum_{u\in U_T}n_u u(t)$ with $(n_u)_{u\in U_T}\in\Z^{U_T}$
then $t\in \langle S \rangle$ and
the $\Z^S$-component $(m_s)_{s\in S}$ of 
$-\sum_{u\in U_T}n_u u \in \Z^T = \Z^{\{t\}} \times \Z^{S}$ 
satisfies $t = \prod_{s\in S}s^{m_s}$.
\end{enumerate}

\medskip

\noindent{\bf{Proof of Theorem \ref{Estarthm}(i).}}
We show that Algorithm \ref{W3} is correct and runs in polynomial time.
In step (i), one has $S \subset E^\ast$ if and only if each $s+\m\neq 0$, because 
in any commutative ring the unit group is the complement of the
union of all prime ideals.
As we saw in the proof of Proposition \ref{W1}, the ideal
$\sqrt{0}$ is nilpotent.
With $m$ as in (iii), one has 
$$
E\supset \sqrt{0} \supsetneq \sqrt{0}^2 \supsetneq \cdots \supsetneq \sqrt{0}^{m-1} \supsetneq \sqrt{0}^m =(0), 
$$
so $m \le \dim_\Q(E)$.
Thus, step (iii) runs in polynomial time.
From the isomorphism in Proposition \ref{W1} it follows that
the kernel of $\Z^S \to E^\ast$ equals the intersection
$H \cap \bigcap_{\m\in\Spec(E)} H_\m$ considered in step (v).
It follows that the algorithm gives the correct output.
\qed

\medskip

\noindent{\bf{Proof of Theorem \ref{Estarthm}(ii).}}
Algorithm \ref{Estarthm2algor} is correct and runs in polynomial time since
\begin{align*}
t \in \langle S \rangle 
& \iff \exists x\in\Z^S  \text{ such that }  (1,-x) \in\ker(g_{T}) = \langle U_{T} \rangle \\
& \iff 1 \in\im(\proj : \langle U_{T} \rangle \subset \Z\times\Z^S\to \Z) \\
& \iff 
\exists (n_u)_{u\in U_{T}}, \exists x\in\Z^S  \text{ such that }  
\sum_{u\in U_{T}} n_u u = (1,-x) \in  \Z^{\{ t\}}\times\Z^S 
\end{align*}
where $\proj$ denotes projection onto the first component.
\qed

\end{document}